\newtheorem{theorem}{Theorem}[section]
\newtheorem{proposition}[theorem]{Proposition}
\newtheorem{lemma}[theorem]{Lemma}
\numberwithin{equation}{section}
\begin{document}

\baselineskip=15pt

\title[Isomonodromic deformations of logarithmic connections and
stability]{Isomonodromic deformations of logarithmic connections and
stability}

\author[I. Biswas]{Indranil Biswas}

\address{School of Mathematics, Tata Institute of Fundamental
Research, Homi Bhabha Road, Bombay 400005, India}

\email{indranil@math.tifr.res.in}

\author[V. Heu]{Viktoria Heu}

\address{Institut de Recherche Math\'ematique Avanc\'ee, 7 rue 
Ren\'e-Descartes, 67084 Strasbourg Cedex, France}

\email{heu@math.unistra.fr}

\author[J. Hurtubise]{Jacques Hurtubise}

\address{Department of Mathematics, McGill University, Burnside
Hall, 805 Sherbrooke St. W., Montreal, Que. H3A 0B9, Canada}

\email{jacques.hurtubise@mcgill.ca}

\subjclass[2010]{14H60, 53B15}

\keywords{Logarithmic connection, isomonodromic deformation, stability,
principal bundle.}

\date{}

\begin{abstract}
Let $X_0$ be a compact connected Riemann surface of genus $g$ with
$D_0\, \subset\, X_0$ an ordered subset of cardinality $n$, and let
$E_G$ be a holomorphic principal $G$--bundle on $X_0$, where
$G$ is a reductive affine algebraic group defined over
$\mathbb C$, that is equipped with a
logarithmic connection $\nabla_0$ with polar divisor $D_0$. Let
$(\mathcal{E}_G\, , \nabla)$ be the universal isomonodromic deformation
of $(E_G\, ,\nabla_0)$ over the universal Teichm\"uller curve
$(\mathcal{X}, \mathcal{D})\,\longrightarrow\, \mathrm{Teich}_{g,n}$,
where $\mathrm{Teich}_{g,n}$ is the Teichm\"uller space for genus $g$ Riemann
surfaces with $n$--marked points.
We prove the following (see Section \ref{SecMain}):
\begin{enumerate}
\item Assume that $g\, \geq\, 2$ and $n\,=\, 0$. Then there is a
closed complex analytic subset $\mathcal{Y}\, \subset\,
\mathrm{Teich}_{g,n}$, of codimension at least $g$,
such that for any $t\,\in\,
\mathrm{Teich}_{g,n} \setminus \mathcal{Y}$, the principal $G$--bundle
$\mathcal{E}_G\vert_{{\mathcal X}_t}$ is semistable, where ${\mathcal X}_t$
is the compact Riemann surface over $t$.

\item Assume that $g\,\geq\, 1$, and if $g\,=\, 1$, then $n\, >\, 0$. Also, assume
that the monodromy representation for $\nabla_0$ does not factor through some proper
parabolic subgroup of $G$. Then there is a
closed complex analytic subset $\mathcal{Y}'\, \subset\,
\mathrm{Teich}_{g,n}$, of codimension at least $g$,
such that for any $t\,\in\,
\mathrm{Teich}_{g,n} \setminus \mathcal{Y}'$, the principal $G$--bundle
$\mathcal{E}_G\vert_{{\mathcal X}_t}$ is semistable.

\item Assume that $g\,\geq\, 2$. Assume
that the monodromy representation for $\nabla_0$ does not factor through some proper
parabolic subgroup of $G$. Then there is a
closed complex analytic subset $\mathcal{Y}''\, \subset\,
\mathrm{Teich}_{g,n}$, of codimension at least $g-1$,
such that for any $t\,\in\, \mathrm{Teich}_{g,n} \setminus \mathcal{Y}'$, the
principal $G$--bundle $\mathcal{E}_G\vert_{{\mathcal X}_t}$ is stable.
\end{enumerate}

In \cite{Viktoria1}, the second--named author proved the above results for
$G\,=\, \text{GL}(2,{\mathbb C})$.
\end{abstract}

\maketitle

\section{Introduction}

Take any quadruple of the form $(E \longrightarrow X\, , D\, ,\nabla)$, where $E$ is a
holomorphic vector bundle over a smooth connected complex variety $X$, and
$\nabla$ is an integrable logarithmic connection on $E$ singular over a simple normal
crossing divisor $D\, \subset\, X$. The monodromy functor associates to it
a representation $\rho_\nabla \,:\, \pi_1(X\setminus D,\, x_0)\,\longrightarrow\, 
\mathrm{GL}(E_{x_0})$, where $x_0\, \in\, X\setminus D$.
Altering the connection by a holomorphic automorphism $A$ of $E$
leads to a representation conjugated by $A(x_0)$. The monodromy functor produces 
an equivalence between the category of 
logarithmic connections $(E\, ,\nabla)$ on $(X,D)$ such that the real parts of
the residues lie in $[0\, ,1)$ and the category of equivalence classes
of linear representations of $\pi_1(X\setminus D,\, x_0)$. 
Given a monodromy representation $\rho$, one can consider the set of all 
logarithmic connections $(E\longrightarrow X\, , D\, ,\nabla)$
(with no condition on the residues) up to holomorphic
isomorphisms that produce the same monodromy 
representation $\rho\,=\,\rho_\nabla$ up to conjugation. All these connections are 
conjugated to each other by meromorphic gauge transformations with possible
poles over $D$ (see for example \cite{Sabbah}).

The classical Riemann--Hilbert problem can be formulated as follows:

\emph{Given a
representation $\rho \,:\, \pi_1(\mathbb{P}^1_{\mathbb C}\setminus D_0,\, x)
\,\longrightarrow\,
\mathrm{GL}(r,\mathbb{C})$, is there a logarithmic connection $(E\longrightarrow
\mathbb{P}^1_{\mathbb C}\,
, D_0\, ,\nabla)$ such that $\rho\,=\,\rho_\nabla$ and $E$ is holomorphically trivial?} 

The answer to this problem is 
\begin{enumerate}
\item positive if rank $r=2$ \cite{Plemelj}, \cite{Dekkers},
\item negative in general ($r\geq 3$) \cite{Bolibruch1},
\item positive if $\rho$ is irreducible \cite{Bolibruch2}, \cite{Kostov}. 
\end{enumerate}

On the other hand, the fundamental group $ \pi_1(\mathbb{P}^1_{\mathbb C}\setminus D_0,
\, x)$ depends only on the topological and not the complex structure of $\mathbb{P}^1_{
\mathbb C}\setminus D_0$. So given an integrable connection on $\mathbb{P}^1_{\mathbb C}
\setminus D_0$, one can consider variations of the
complex structure without changing the monodromy representation.
More precisely, consider the Teichm\"uller space $\mathrm{Teich}_{0,n}$ of the
$n$--pointed Riemann sphere together with the corresponding universal Teichm\"uller curve
$$
\tau\,:\,(\mathcal{X}\,=\,\mathbb{P}^1_{\mathbb C}\times \mathrm{Teich}_{0,n}\, ,
\mathcal{D})\,\longrightarrow\, \mathrm{Teich}_{0,n}\, ,
$$
where $n\,=\,{\rm degree}(D_0)$. Since $\mathrm{Teich}_{0,n}$ is contractible, the inclusion
$$
(\mathbb{P}^1_{\mathbb C}\, , D_t)\,:=\,\tau^{-1}(t)\,\hookrightarrow\, (\mathcal{X}
\, ,\mathcal{D})\, ,\ \ t\,\in\, \mathrm{Teich}_{0,n}\, ,
$$
induces an isomorphism $\pi_1(\mathbb{P}^1_{\mathbb C}\setminus D_t, \, x_t)
\,\simeq\, \pi_1(\mathcal{X}\setminus \mathcal{D}, \, x_t)$.
Hence by the Riemann--Hilbert correspondence, we can associate to
any logarithmic connection $(E_0\, ,\nabla_0)$ on $\mathbb{P}^1_{\mathbb C}$, with
polar divisor $D_0$, its \emph{universal isomonodromic deformation}: a flat logarithmic
connection $(\mathcal{E}\, , \nabla)$ over $\mathcal{X}$ with polar divisor
$\mathcal{D}$ that extends the connection $(E_0,\nabla_0)$. The conjugacy
class of the monodromy representation
for $\nabla\vert_{\tau^{-1}(t)}$ does not change as $t$ moves over
$\mathrm{Teich}_{0,n}$ (see for example \cite{Viktoria2}).

We are led to another Riemann--Hilbert problem:

\emph{Given a logarithmic connection 
$(E_0\, ,\nabla_0)$ on $\mathbb{P}^1_{\mathbb C}$ with polar divisor $D_0$ of degree
$n$, is there a point $t\,\in\, \mathrm{Teich}_{0,n}$ such that
the holomorphic vector bundle $E_t\,=\,\mathcal{E}\vert_{\mathbb{P}^1_{\mathbb C}
\times\{t\}}$ underlying the 
universal isomonodromic deformation $(\mathcal{E}\, , \nabla)$ is trivial?}

A partial answer to this question is given by the following theorem
of Bolibruch:

\begin{theorem}[{\cite{Bolibruch3}}]\label{thmBolibruch}
Let $(E_0\, ,\nabla_0)$ be an irreducible trace--free logarithmic rank two connection
with $n\,\geq\, 4$ poles on $\mathbb{P}^1_{\mathbb C}$ such that each singularity is
resonant. There is a proper closed complex analytic subset $\mathcal{Y}\,\subset\,
\mathrm{Teich}_{0,n}$ such that for all $t\,\in\, \mathrm{Teich}_{0,n}\setminus
\mathcal{Y}$, the holomorphic vector bundle $E_t\,=\,\mathcal{E}\vert_{\mathbb{P}^1_{
\mathbb C}\times\{t\}}$ underlying the universal isomonodromic deformation
$(\mathcal{E}\, , \nabla)$ of $(E_0,\nabla_0)$ is trivial. 
\end{theorem}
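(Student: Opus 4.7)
The plan is first to recognize that, by Grothendieck's splitting theorem, for every $t \in \mathrm{Teich}_{0,n}$ the degree zero rank two bundle $E_t$ on $\mathbb{P}^1_{\mathbb C} \times \{t\}$ has the form $\mathcal{O}(k_t) \oplus \mathcal{O}(-k_t)$ for a unique $k_t \geq 0$, so triviality is equivalent to $k_t = 0$. The integer $k_t$ is upper semicontinuous in $t$ (for instance via $h^0(\mathcal{E}|_t \otimes \mathcal{O}(-1))$, which jumps by exactly $k_t$), so
\[
\mathcal{Y} \; := \; \{t \in \mathrm{Teich}_{0,n} \, : \, E_t \text{ is not trivial}\}
\]
is automatically a closed complex analytic subset. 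Everything therefore reduces to exhibiting one point $t_* \in \mathrm{Teich}_{0,n}$ at which $E_{t_*}$ is trivial.

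For this I would invoke the Bolibruch--Kostov theorem (item (3) in the list preceding Theorem \ref{thmBolibruch}): since the monodromy $\rho_{\nabla_0}$ is irreducible, there exists a logarithmic connection $(\widetilde{E}, \widetilde{\nabla})$ on $\mathbb{P}^1_{\mathbb C}$ with $\widetilde{E}$ holomorphically trivial, with monodromy $\rho_{\nabla_0}$, and with singular divisor of cardinality $n$. The substantive content then is to match this Bolibruch--Kostov realization with the isomonodromic family: any two logarithmic connections on $\mathbb{P}^1_{\mathbb C}$ with the same monodromy and the same singular divisor differ by a meromorphic gauge transformation with poles along the divisor, and in the non--resonant case this gauge is forced to be holomorphic, so the isomorphism class of the underlying bundle is pinned down by $\rho_{\nabla_0}$ alone. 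The hypothesis that each singularity is resonant is precisely what breaks this rigidity and produces a finite discrete catalogue of realizations, parametrized by integer shifts of the residue eigenvalues and related by Hecke (elementary) modifications at the singular points, each of which changes the splitting type $k_t$.

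The crux of the argument, and the step I expect to be most delicate, is then to show that the isomonodromic family $(\mathcal{E}, \nabla)$ actually meets the trivial branch over some point of $\mathrm{Teich}_{0,n}$. The isomonodromic deformation preserves the residue conjugacy classes, so the variation of $t$ combined with the discrete resonance ambiguity must be shown to realize the Bolibruch--Kostov connection $(\widetilde{E}, \widetilde{\nabla})$ as a fiber of $(\mathcal{E}, \nabla)$. The conditions $n \geq 4$ and full resonance enter precisely here: the former provides enough singular points at which to perform elementary transformations, and the latter supplies the needed flexibility in the residues so that the resulting meromorphic gauge between the isomonodromic member at $t_*$ and the Bolibruch--Kostov connection extends holomorphically. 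Once such a $t_*$ is produced, the semicontinuity from the first paragraph immediately shows that triviality persists on an open neighborhood, so $\mathcal{Y}$ is a proper closed analytic subset of $\mathrm{Teich}_{0,n}$, as desired.
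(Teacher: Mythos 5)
The statement you are proving is not proved in the paper at all: it is quoted from Bolibruch \cite{Bolibruch3}, so your attempt has to stand on its own. Your first step is fine: by Grothendieck's theorem $E_t\simeq \mathcal{O}(k_t)\oplus\mathcal{O}(-k_t)$, the function $k_t=h^0({\mathcal X}_t, E_t\otimes\mathcal{O}(-1))$ is upper semicontinuous and its jumping locus is a closed analytic subset, so the whole theorem does reduce to exhibiting a single $t_*\in\mathrm{Teich}_{0,n}$ with $E_{t_*}$ trivial. The genuine gap is that this single point is the entire content of Bolibruch's theorem, and your proposed route to it does not work. The Bolibruch--Kostov theorem gives \emph{some} logarithmic connection $(\widetilde E,\widetilde\nabla)$ with trivial bundle and monodromy $\rho_{\nabla_0}$, but that connection is in general \emph{not} a fiber of the universal isomonodromic deformation of $(E_0,\nabla_0)$: an isomonodromic deformation preserves the residue eigenvalues (local exponents) at each pole, while the trivial-bundle realization of a given monodromy is typically obtained from $(E_0,\nabla_0)$ by meromorphic gauge transformations (elementary/Hecke modifications) that shift these exponents by integers. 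Moving $t$ in $\mathrm{Teich}_{0,n}$ never changes the exponents, so no choice of $t_*$ can absorb such a shift; knowing that the monodromy admits a trivial-bundle realization with $n$ poles therefore does not place a trivial fiber inside the family $(\mathcal{E},\nabla)$. Your third paragraph concedes that this matching is ``the crux'' and ``must be shown,'' but offers no argument for it, so the proof is missing exactly the step that makes the theorem nontrivial.

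Two smaller points. Your claim that in the non-resonant case the meromorphic gauge between two connections with the same monodromy and polar divisor ``is forced to be holomorphic'' is not correct: elementary modifications shifting an exponent by an integer exist at non-resonant singularities as well, and non-resonance only governs the local normal form, not the uniqueness of the bundle extension; correspondingly, the role you assign to the resonance hypothesis is speculative (indeed the paper remarks, citing \cite{Viktoria1}, that this hypothesis can be removed). Finally, note that the strategy actually used in this paper and in \cite{Viktoria1} for the genus $g>0$ analogues is different in spirit: rather than producing one good fiber from a trivial realization of the monodromy, one bounds the codimension of the locus of non-semistable (here, nontrivial) bundles inside the Harder--Narasimhan strata by showing, via the second fundamental form of the destabilizing reduction, that the isomonodromic deformation class is obstructed against preserving that reduction; Bolibruch's own proof in the genus $0$ resonant case is likewise a direct construction, not a reduction to the Riemann--Hilbert solvability statement.
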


It should be mentioned that the condition in Theorem \ref{thmBolibruch}, that
each singularity is resonant, can actually be removed \cite{Viktoria1}.

{}From the Birkhoff--Grothendieck classification of holomorphic vector bundles on
$\mathbb{P}^1_{\mathbb C}$ it follows immediately
that the only semistable holomorphic vector
bundle of degree zero and rank $r$ on $\mathbb{P}^1_{\mathbb C}$ is the trivial bundle
$\mathcal{O}_{\mathbb{P}^1_{\mathbb C}}^{\oplus r}$. This leads to the following
more general question:
 
\emph{Given a representation $\rho \,:\, \pi_1(X\setminus D,\, x)
\,\longrightarrow\, \mathrm{GL}_r(\mathbb{C})$, where $X$ is a compact connected
Riemann surface, is there a logarithmic connection $(E\longrightarrow X\,, D\, ,\nabla)$
such that $\rho\,=\, \rho_\nabla$ and $E$ is semistable of degree zero?}

The answer to this problem is
\begin{enumerate}
\item negative in general \cite{Helene1},
\item positive if $\rho$ is irreducible \cite{Helene2}. 
\end{enumerate}

Let $\tau\,:\,(\mathcal{X}\, , \mathcal{D})\,\longrightarrow\, \mathrm{Teich}_{g,n}$ 
be the universal Teichm\"uller curve. In view of the above, it is natural to ask the 
following:

\emph{Given a logarithmic connection $(E_0\, ,
\nabla_0)$, with polar divisor $D_0$ of degree $n$ on a compact connected Riemann
surface $X_0$ of genus $g$, is there an element $t\,\in\, \mathrm{Teich}_{g,n}$
such that the holomorphic vector bundle $E_t\,=\,\mathcal{E}\vert_{{\mathcal X}_t}\,
\longrightarrow\, {\mathcal X}_t\,=\, \tau^{-1}(t)$ underlying the universal
isomonodromic deformation $(\mathcal{E}\, , \nabla)$ of
$(E_0\, , \nabla_0)$ is semistable?}

Note that we necessarily have ${\rm degree}(E_t)\,=\,{\rm degree}(E_0)$. Again,
Theorem \ref{thmBolibruch} can be generalized as follows. 

\begin{theorem}[{\cite{Viktoria1}}]\label{thmViktoria}
Let $(E_0\, ,\nabla_0)$ be an irreducible logarithmic rank two connection with polar
divisor $D_0$ of degree $n$ on a compact connected Riemann surface $X_0$
of genus $g$ such that $3g-3+n\, >\, 0$. Consider
its universal isomonodromic deformation $(\mathcal{E}\, , \nabla)$ over $\tau\,:\,
(\mathcal{X}\, , \mathcal{D})\,\longrightarrow\, \mathrm{Teich}_{g,n}$. There is a
closed complex analytic subset $\mathcal{Y}\, \subset\, \mathrm{Teich}_{g,n}$
(respectively, $\mathcal{Y}_s\, \subset\, \mathrm{Teich}_{g,n}$)
of codimension at least $g$ (respectively, $g-1$) such that for any $t\,\in\, 
\mathrm{Teich}_{g,n}\setminus\mathcal{Y}$, the vector bundle $E_t\,=\,
\mathcal{E}\vert_{{\mathcal X}_t}$, where $({\mathcal X}_t\, ,D_t)\,=\,
\tau^{-1}(t)$, is semistable (respectively, stable). 
\end{theorem}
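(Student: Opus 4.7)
The plan is to parametrize all potential destabilizations and bound the dimension of that parameter space. Write the unstable locus as $\mathcal{Y}=\bigcup_{d}\mathcal{Y}_d$, where
$$\mathcal{Y}_d\,:=\,\{t\in\mathrm{Teich}_{g,n}\,:\, E_t\ \text{admits a line subbundle of degree}\ \ge d\}\, .$$
Because isomonodromic deformation preserves the conjugacy class of the monodromy representation, irreducibility of $\nabla_0$ propagates to $\nabla_t$ for every $t$, so no subbundle $L_t\subset E_t$ is $\nabla_t$-flat. Composing the inclusion with $\nabla_t$ and projecting to the quotient yields a non-zero $\mathcal{O}_{X_t}$-linear second fundamental form
$$\phi_{L_t}\,:\,L_t\,\longrightarrow\,(E_t/L_t)\otimes K_{X_t}(D_t)\, ,$$
whence $2\deg L_t\le\deg E_t+2g-2+n$; only finitely many $d$ are therefore relevant. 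For each such $d$, I would form the relative Quot scheme $\pi_d\colon\mathcal{Q}_d\to\mathrm{Teich}_{g,n}$ parametrizing pairs $(t,\,L_t\hookrightarrow E_t)$ with $\deg L_t=d$. Its image equals $\mathcal{Y}_d$, so it suffices to estimate $\dim\mathcal{Q}_d$.

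\textbf{Apparent singularities and dimension count.} On $\mathcal{Q}_d$ the universal second fundamental form has a divisor of zeros of degree $N=\deg E_t-2d+2g-2+n$, defining an apparent-singularity map
$$\mathrm{App}\,:\,\mathcal{Q}_d\,\longrightarrow\,\mathrm{Sym}^{N}(\mathcal{X}/\mathrm{Teich}_{g,n})\, .$$
Once the isomorphism classes of $L_t$ and $E_t/L_t$ are fixed, the destabilization $L_t\hookrightarrow E_t$ is recovered from $\mathrm{App}(t,L_t)$ up to finite ambiguity; combined with the obvious fibration of $\mathcal{Q}_d$ over the relative Jacobian, this reduces the bound for $\dim\mathcal{Q}_d$ to one on the dimension of the image of $\mathrm{App}$ relatively over $\mathrm{Teich}_{g,n}$. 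The key assertion is that the isomonodromic deformation, viewed as a Hamiltonian flow on apparent singularities (the higher-genus analogue of the Schlesinger system for $\mathbb{P}^1$), moves $\mathrm{App}(t,L_t)$ in at least $g$ independent directions transverse to the locus of divisors actually arising from subbundles at a fixed $t$. This forces $\operatorname{codim}\mathcal{Y}_d\ge g$. For the stability refinement one adjoins the strictly semistable stratum $2d=\deg E_t$: here $N$ is larger by one and the same count yields only codimension $g-1$, accounting for $\mathcal{Y}_s$.

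\textbf{Main obstacle.} The technical heart is the transversality assertion in the previous paragraph. Infinitesimally, the isomonodromic tangent at $t$ is encoded by a Kodaira--Spencer cocycle in $H^1(X_t,\operatorname{End}(E_t)(-D_t))$ coming from the Riemann--Hilbert correspondence; its pairing against a destabilization $L_t\subset E_t$ must be shown to have rank at least $g$ modulo tangent vectors to the stratum of destabilizations at fixed $t$. Irreducibility of $(E_0,\nabla_0)$ enters essentially here, since a reducible connection would admit a $\nabla$-flat destabilization persisting along the entire isomonodromic family and would immediately contradict any codimension bound. Proving the rank estimate uniformly in the strata $(d,L_t)$, and interpreting it through the Goldman symplectic pairing on the character variety, is where I expect the bulk of the work to lie; everything else in the argument is essentially bookkeeping around this non-degeneracy statement.
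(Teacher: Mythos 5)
Your setup is sound (degree bound on destabilizing subbundles via the nonvanishing second fundamental form, finiteness of relevant degrees $d$, stratification of the unstable locus), and your final numerology matches the theorem. But the proposal has a genuine gap exactly at its declared heart: the assertion that the isomonodromic flow moves the apparent-singularity divisor $\mathrm{App}(t,L_t)$ in at least $g$ independent directions transverse to the fixed-$t$ locus is not an auxiliary technicality to be filled in later --- it \emph{is} the theorem, and you give no mechanism connecting the isomonodromic (Kodaira--Spencer) direction to the motion of $\mathrm{App}$, nor any reason the rank is uniformly $\geq g$ over all strata. The invocation of a Hamiltonian/Goldman-pairing interpretation is a heuristic, not an argument. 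A secondary weak point: the claim that $(L_t, E_t/L_t, \mathrm{App})$ recovers the destabilization up to finite ambiguity is not justified (the section of $\mathrm{Hom}(L_t,E_t/L_t)\otimes K_{X_t}(D_t)$ is only determined by its divisor up to scalar, and in any case this does not by itself bound the relative dimension of your $\mathcal{Q}_d$ in the way you need). You also tacitly need the image of $\mathcal{Q}_d$ to be a closed analytic subset, which requires an argument (properness of the maximal-degree Quot, or a Shatz/Gurjar--Nitsure type semicontinuity statement, which is what the paper cites).

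For comparison, the paper proves the general principal-bundle statement (which contains the rank-two case) by making your transversality claim precise as a cohomological obstruction rather than a flow statement: if the destabilizing reduction extends along an infinitesimal deformation of the pointed curve, then the Kodaira--Spencer class $\phi(1_{X_0})\in \mathrm{H}^1(X_t,\mathrm{T}X_t(-D_t))$ must lie in the kernel of the map $S'\colon \mathrm{H}^1(X_t,\mathrm{T}X_t(-D_t))\to \mathrm{H}^1(X_t,\mathcal{L})$ induced by the second fundamental form (Proposition \ref{prop1}, proved by a diagram chase with logarithmic Atiyah sequences). Then the short exact sequence $0\to \mathrm{T}X_t(-D_t)\to\mathcal{L}\to T^\delta\to 0$ and Riemann--Roch give $\dim\mathrm{im}(S')\geq \dim\mathrm{H}^1(\mathrm{T}X_t(-D_t))-\deg T^\delta\geq g$ when $\deg\mathcal{L}<0$ (unstable case), and $\geq g-1$ when $\deg\mathcal{L}\leq 0$ (strictly semistable case), while the Harder--Narasimhan strata are analytic and carry an extension of the reduction by Lemma \ref{lemNitsure}. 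This is the concrete content your ``key assertion'' would have to reproduce; without it, your proposal assumes what is to be proved.
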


Our aim here is to prove an analog of Theorem \ref{thmViktoria} in the
more general context of logarithmic connections on
principal $G$--bundles over a compact connected Riemann surface (see \cite{Philip} for
logarithmic connections on principal $G$--bundles).

Let $X_0$ be a compact connected Riemann surface of genus $g$, and let $D_0\,
\subset\, X_0$ be an ordered subset of it of cardinality $n$. Let $G$ be a
reductive affine algebraic group defined over
$\mathbb C$. Let $E_G$ be a holomorphic principal $G$--bundle on $X_0$ and
$\nabla_0$ a logarithmic connection on $E_G$ with polar divisor $D_0$. Let
$(\mathcal{E}_G\, , \nabla)$ be the universal isomonodromic deformation
of $(E_G\, ,\nabla_0)$ over the universal Teichm\"uller curve
$\tau\,:\, (\mathcal{X}\, , \mathcal{D})\,\longrightarrow\, \mathrm{Teich}_{g,n}$.
For any point $t\, \in\, \mathrm{Teich}_{g,n}$, the restriction
$\mathcal{E}_G\vert_{\tau^{-1}(t)}\,\longrightarrow\, {\mathcal X}_t\,:=\,
\tau^{-1}(t)$ will be denoted by $\mathcal{E}^t_G$.

We prove the following (see Section \ref{SecMain}):

\begin{theorem}\label{Result}
\mbox{}
\begin{enumerate}
\item Assume that $g\, \geq\, 2$ and $n\,=\, 0$. Then there is a
closed complex analytic subset $\mathcal{Y}\, \subset\,
\mathrm{Teich}_{g,n}$ of codimension at least $g$ such that for any $t\,\in\,
\mathrm{Teich}_{g,n} \setminus \mathcal{Y}$, the holomorphic principal $G$--bundle
$\mathcal{E}^t_G\,\longrightarrow\, {\mathcal X}_t$ is semistable.

\item Assume that $g\,\geq\, 1$, and if $g\,=\, 1$, then $n\, >\, 0$. Also, assume
that the monodromy representation for $\nabla_0$ does not factor through some proper
parabolic subgroup of $G$. Then there is a
closed complex analytic subset $\mathcal{Y}'\, \subset\,
\mathrm{Teich}_{g,n}$ of codimension at least $g$ such that for any $t\,\in\,
\mathrm{Teich}_{g,n} \setminus \mathcal{Y}'$, the holomorphic principal $G$--bundle
$\mathcal{E}^t_G$ is semistable.

\item Assume that $g\,\geq\, 2$. Assume that the monodromy representation for
$\nabla_0$ does not factor through some proper parabolic subgroup of $G$. Then there
is a closed complex analytic subset $\mathcal{Y}''\, \subset\,\mathrm{Teich}_{g,n}$ of
codimension at least $g-1$ such that for any $t\,\in\, \mathrm{Teich}_{g,n}\setminus
\mathcal{Y}'$, the holomorphic principal $G$--bundle $\mathcal{E}^t_G$ is stable.
\end{enumerate}
\end{theorem}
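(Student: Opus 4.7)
The plan is to adapt the rank-two strategy from \cite{Viktoria1} to principal $G$--bundles by replacing line subbundles with holomorphic reductions of structure group to maximal parabolic subgroups $P\subset G$, and by bounding the locus in $\mathrm{Teich}_{g,n}$ where a destabilizing reduction can persist, via a Kodaira--Spencer/second-fundamental-form obstruction.

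I would begin by recalling that $\mathcal{E}^t_G$ fails to be semistable (respectively, stable) precisely when, for some maximal parabolic $P\subset G$, it admits a holomorphic reduction $E_P\subset \mathcal{E}^t_G$ with $\deg (E_P(\mathfrak{g}/\mathfrak{p}))<0$ (respectively, $\le 0$). A reduction of $\mathcal{E}_G$ to $P$ over $\mathcal{X}$ is a section of the relative flag bundle $\mathcal{E}_G/P\to \mathcal{X}$; the universal isomonodromic connection $\nabla$ induces a logarithmic connection on $\mathcal{E}_G/P$ whose horizontal sections correspond to $P$--reductions of the monodromy representation of $\nabla_0$. Under the hypothesis of parts (2) and (3), no such horizontal reduction exists. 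Consequently, whenever a reduction $E_P\subset \mathcal{E}^{t_0}_G$ exists on a fiber ${\mathcal X}_{t_0}$, its second fundamental form, viewed as an element of $H^0({\mathcal X}_{t_0}, E_P(\mathfrak{g}/\mathfrak{p})\otimes \Omega^1_{{\mathcal X}_{t_0}}(D_{t_0}))$, is not identically zero.

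The heart of the proof, for each conjugacy class of maximal parabolic $P$ and each admissible degree, would be to bound the codimension of the locus $\mathcal{Y}_{P}\subset \mathrm{Teich}_{g,n}$ of $t$ admitting such a reduction of $\mathcal{E}^t_G$. Using the Atiyah-type exact sequence attached to $E_P\subset \mathcal{E}^t_G$, I would identify the obstruction to extending $E_P$ along a Teichmüller direction $v\in T_{t_0}\mathrm{Teich}_{g,n}\cong H^1({\mathcal X}_{t_0},T_{{\mathcal X}_{t_0}}(-D_{t_0}))$ as the cup product of $v$ with the second fundamental form, landing in $H^1({\mathcal X}_{t_0}, E_P(\mathfrak{g}/\mathfrak{p}))$. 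A Riemann--Roch computation for $E_P(\mathfrak{g}/\mathfrak{p})$ on ${\mathcal X}_{t_0}$ combined with Serre duality then forces the codimension of $\mathcal{Y}_P$ to be at least $g$ under $\deg<0$ and at least $g-1$ under $\deg\le 0$. A finite union over the conjugacy classes of maximal parabolic subgroups and over the finitely many numerically relevant degrees yields $\mathcal{Y}'$ and $\mathcal{Y}''$. For part (1), where the monodromy may factor through a parabolic and so the input above is unavailable, I would use that with $g\ge 2$ and $n=0$ the Teichmüller directions are of dimension $3g-3$ and force the degree of the associated bundle of any non-horizontal reduction to vary with $t$, so that the destabilization condition is cut out by at least $g$ independent analytic conditions.

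The main obstacle, and the point where Heu's rank-two argument must be genuinely extended rather than merely translated, is the uniform control of the second fundamental form and the resulting Kodaira--Spencer cup product across all maximal parabolic subgroups of $G$ simultaneously. For general reductive $G$ the adjoint quotient $E_P(\mathfrak{g}/\mathfrak{p})$ decomposes according to the roots of $G$ relative to $P$, and one must verify non-vanishing of the pertinent Kodaira--Spencer contributions in enough root summands to secure the codimension estimate. A secondary subtlety is the handling of the polar divisor $D_t$ in the logarithmic setting, which enters through the twist $\Omega^1_{{\mathcal X}_t}(D_t)$ and changes the relevant Euler characteristics; keeping track of this uniformly as $t$ varies is what dictates the slightly weaker codimension bound in part (3) and the need for the hypothesis $3g-3+n>0$ implicit in the statement.
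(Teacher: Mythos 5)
Your overall strategy (obstruct the persistence of a destabilizing parabolic reduction by cupping the Kodaira--Spencer class with the second fundamental form) is the same as the paper's, but as written the argument has three genuine gaps. The most serious one is the codimension estimate. You take an arbitrary destabilizing reduction to a maximal parabolic with $\deg E_P(\mathfrak{g}/\mathfrak{p})<0$ and claim that a Riemann--Roch computation for $E_P(\mathfrak{g}/\mathfrak{p})$ plus Serre duality forces the obstruction map $\mathrm{H}^1(X_0,\mathrm{T}X_0(-D_0))\to \mathrm{H}^1(X_0,E_P(\mathfrak{g}/\mathfrak{p}))$ to have image of dimension at least $g$. This does not follow: the second fundamental form is a sheaf map of generic rank one into a bundle of higher rank, so its cokernel has positive rank and an uncontrolled $\mathrm{H}^0$, and knowing only $\deg E_P(\mathfrak{g}/\mathfrak{p})<0$ gives no lower bound on the rank of the induced map on $\mathrm{H}^1$. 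The paper's fix is twofold: it works with the \emph{Harder--Narasimhan} reduction, for which $\mu_{\max}(E_P(\mathfrak{g}/\mathfrak{p}))<0$ (by Anchouche--Azad--Biswas), and it saturates the image of $S(\delta)$ inside a line subbundle $\mathcal{L}\subset E_P(\mathfrak{g}/\mathfrak{p})$; then $\deg\mathcal{L}<0$, the cokernel of $\mathrm{T}X_0(-D_0)\to\mathcal{L}$ is torsion of degree $<2g-2+n$, and the long exact sequence gives the desired rank bound $\geq g$ (and $\geq g-1$ when one only has $\deg\mathcal{L}\leq 0$, which is what yields the weaker estimate in part (3), not the bookkeeping of the polar divisor). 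With your arbitrary maximal-parabolic reduction, a line subbundle of a negative-degree bundle can perfectly well have positive degree, so your root-space remark correctly flags the difficulty but does not resolve it.

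Two further points. First, your argument for part (1) (the case $n=0$ without irreducibility) is not a proof: degrees of associated bundles are locally constant, so they cannot be ``forced to vary with $t$'' to cut out $g$ conditions. The paper instead observes that for the HN reduction $\deg\mathrm{ad}(E_P)>0$, so $E_P$ carries no holomorphic connection (Atiyah--Weil), hence $S(\delta)\neq 0$ even without any hypothesis on the monodromy. Second, your infinitesimal obstruction only constrains the tangent space of the bad locus if the destabilizing reduction is known to extend holomorphically over (a stratum of) that locus, and you also need the locus to be analytic with only finitely many relevant strata; the paper gets all of this from the schematic Harder--Narasimhan stratification for families of principal bundles (Gurjar--Nitsure), which supplies locally closed analytic strata $\mathcal{Y}_\kappa$, finiteness of the occurring types, and a canonical relative HN reduction over each stratum to which Proposition \ref{prop1} can be applied. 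Without these ingredients (HN reduction with $\mu_{\max}<0$, the line bundle $\mathcal{L}$, the degree argument for part (1), and the relative HN stratification), the proposal does not yet yield the stated codimension bounds.
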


It is known that if a holomorphic principal $G$--bundle $E_G$ over a complex elliptic
curve admits a holomorphic connection, then $E_G$ is semistable. Therefore, a stronger
version of Theorem \ref{Result}(1) is valid when $g\,=\,1$.

\section{Infinitesimal deformations}

We first recall some classical results in deformation theory, and in the process 
setting up our notation.

\subsection{Deformations of a $n$--pointed curve}\label{se2.1}

Let $X_0$ be an irreducible smooth complex projective curve of genus $g$,
with $g\, >\, 0$, and let
$$
D_0\,:=\, \{x_1\, , \cdots\, , x_n\}\, \subset\, X_0
$$
be an ordered subset of cardinality $n$ (it may be zero). We assume that $n\, >\, 0$ if
$g\,=\,1$. This condition implies that the pair $(X_0\, ,D_0)$ does not have
any infinitesimal
automorphism, equivalently, the automorphism group of $(X_0\, ,D_0)$
is finite.

Let $B\,:=\, \text{Spec}({\mathbb C}[\epsilon]/\epsilon^2)$ be the spectrum of
the local ring. 
An \emph{infinitesimal deformation of $(X_0\, ,D_0)$} is given by a quadruple
\begin{equation}\label{e1}
({\mathcal X}\, ,q \, , {\mathcal D}\, ,f)\, ,
\end{equation}
where
\begin{itemize}
\item $q\, :\, {\mathcal X}\, \longrightarrow\, B$
is a smooth proper holomorphic morphism of relative dimension one,

\item $\mathcal{D}\,=\, (\mathcal{D}_1\, ,\cdots\, , \mathcal{D}_n)$ is
a collection of $n$ ordered disjoint sections of $q$, and

\item $f\, :\, X_0\,\longrightarrow\, {\mathcal X}$
is a holomorphic morphism such that
$$
f(X_0)\, \subset\, {\mathcal X}_0 \,:=\, q^{-1}(0) \ \ \text{ with }\ \
f(x_i)\,=\, \mathcal{D}_i(0) ~ \ \ \forall \ \ 1\,\leq\,i\, \leq\, n\, ,
$$
and the morphism $X_0\,\stackrel{f}{\longrightarrow}\, {\mathcal X}_0$ is an isomorphism.
\end{itemize}
The divisor $\sum_{i=1}^n \mathcal{D}_i(B)$ on $\mathcal X$ will also
be denoted by $\mathcal{D}$. For a vector bundle $\mathcal{V}$ on $\mathcal X$, the
vector bundle $\mathcal{V}\otimes_{{\mathcal O}_{\mathcal X}} {\mathcal O}_{\mathcal X}(-
{\mathcal D})$ will be denoted by $\mathcal{V}(-\mathcal{D})$.

The differential of $f$
$$
\mathrm{d}f\, :\, \mathrm{T}X_0\, \longrightarrow\,
f^*\mathrm{T}{\mathcal X}
$$
produces a homomorphism
$$
\mathrm{T}X_0(-D_0)\,:=\,
\mathrm{T}X_0\otimes_{{\mathcal O}_{X_0}}{\mathcal O}_{X_0}(-D_0)
\,\longrightarrow\, f^*\mathrm{T}{\mathcal X}(-\mathcal{D})
$$
which will
also be denoted by $\mathrm{d}f$. Consider the following short exact sequence of
coherent sheaves on $X_0$:
\begin{equation}\label{ti}
0\, \longrightarrow\, \mathrm{T}X_0(-D_0)\, \stackrel{\mathrm{d}f}{\longrightarrow}\,
f^*\mathrm{T}{\mathcal X}(-\mathcal{D})
\, \stackrel{h}{\longrightarrow}\, {\mathcal O}_{X_0} \, \longrightarrow\, 0\, ;
\end{equation}
note that the normal bundle to ${\mathcal X}_{0}\, \subset\, \mathcal X$ is the
pullback of $\mathrm{T}_{0}B$ by $q\vert_{{\mathcal X}_{0}}$, and hence
this normal bundle is identified with ${\mathcal O}_{X_0}$. Consider the
connecting homomorphism
\begin{equation}\label{ch}
{\mathbb C} \,=\, \mathrm{H}^0(X_0,\, {\mathcal O}_{X_0})\,
\stackrel{\phi}{\longrightarrow}\,\mathrm{H}^1(X_0,\, \mathrm{T}X_0(-D_0))
\end{equation}
in the long exact sequence of cohomologies
associated to the short exact sequence in \eqref{ti}. Let $1_{X_0}$ denote the
constant function $1$ on $X_0$. The cohomology element
\begin{equation}\label{e2}
\phi(1_{X_0})\, \in\, \mathrm{H}^1(X_0,\, \mathrm{T}X_0(-D_0))\, ,
\end{equation}
where $\phi$ is the homomorphism in \eqref{ch},
is the \emph{Kodaira--Spencer infinitesimal deformation class} for the
family in \eqref{e1}. If this infinitesimal deformation class is zero, then the
family $(\mathcal{X}\, , \mathcal{D})\,\longrightarrow\, B$ is isomorphic to
the trivial family $(X_0\times B \, , D_0\times B)\,\longrightarrow\, B$. 

\subsection{Deformations of a curve together with a
principal bundle}\label{SecBundleDef}

Take $(X_0\, ,D_0)$ as before.
Let $G$ be a connected reductive affine algebraic group defined over $\mathbb C$.
The Lie algebra of $G$ will be denoted by $\mathfrak g$. Let
\begin{equation}\label{g1}
p\, :\, E_G\,\longrightarrow\, X_0
\end{equation}
be a holomorphic principal $G$--bundle on $X_0$. The infinitesimal deformations of the triple
\begin{equation}\label{tr}
(X_0\, ,D_0\, ,E_G)
\end{equation}
are guided by the Atiyah bundle $\text{At}(E_G)\,\longrightarrow\, X_0$, the construction of which we shall
briefly recall (see \cite{At} for a more detailed exposition). Consider the direct image
$p_*\mathrm{T}E_G\, \longrightarrow\, X_0$, where $\mathrm{T}E_G$ is the holomorphic
tangent bundle of $E_G$, and $p$ is the projection in \eqref{g1}. It is a quasicoherent
sheaf equipped with an action of $G$ given by the action of $G$ on $E_G$. The
invariant part
$$
\text{At}(E_G)\,:=\, (p_*\mathrm{T}E_G)^G \, \subset\, (p_*\mathrm{T}E_G)
$$
is a vector bundle on $X_0$ of rank $1+\dim G$ which is known as the 
Atiyah bundle of $E_G$. Consequently, we have $\text{At}(E_G)\,=\, (TE_G)/G$. Let
$$
\text{ad}(E_G)\, :=\, E_G\times^G {\mathfrak g}\,\longrightarrow\, X_0
$$
be the adjoint vector bundle associated to $E_G$ for the adjoint action of
$G$ on its Lie algebra $\mathfrak g$. So the fibers of $\text{ad}(E_G)$ are
Lie algebras isomorphic to $\mathfrak g$. Let
$$
\mathrm{d}p\, :\, \mathrm{T}E_G\, \longrightarrow\, p^*\mathrm{T}X_0
$$
be the differential of the map $p$ in \eqref{g1}. Being $G$--equivariant
it produces a homomorphism $\text{At}(E_G)\,
\longrightarrow\,\mathrm{T}X_0$ which will also be denoted by $\mathrm{d}p$.
Now, the action of $G$ on $E_G$ produces an isomorphism $E_G\times{\mathfrak g}
\,\longrightarrow\, \text{kernel}(\mathrm{d}p)$. Therefore, we have
$\text{kernel}(\mathrm{d}p)/G\,=\, \text{ad}(E_G)$. In other words, the
above isomorphism $E_G\times{\mathfrak g}
\,\longrightarrow\, \text{kernel}(\mathrm{d}p)$ descends to an isomorphism
$$
\text{ad}(E_G)\,\stackrel{\sim}{\longrightarrow}\, (p_*(\text{kernel}(\mathrm{d}p)))^G
$$
that preserves the Lie algebra structure on the fibers of the two vector bundles
(the Lie algebra structure on the fibers of $(p_*(\text{kernel}(\mathrm{d}p)))^G$
is given by the Lie bracket of $G$--invariant vertical vector fields). Therefore,
$\text{At}(E_G)$ fits in the following short exact sequence of vector bundles on $X_0$
\begin{equation}\label{at1}
0\, \longrightarrow\, \text{ad}(E_G)\, \longrightarrow\,\text{At}(E_G)\,
\stackrel{\mathrm{d}p}{\longrightarrow}\, \mathrm{T}X_0\, \longrightarrow\, 0\, ,
\end{equation}
which is known as the Atiyah exact sequence for $E_G$. The logarithmic Atiyah bundle $\text{At}(E_G\,,D_0)$ is defined by 
$$
\text{At}(E_G\,,D_0)\,:=\, (\mathrm{d}p)^{-1}(\mathrm{T}X_0(-D_0))
\,\subset\, \text{At}(E_G) \,.
$$
{}From \eqref{at1} we have the short exact sequence of vector bundles on $X_0$
\begin{equation}\label{e3}
0\, \longrightarrow\, \text{ad}(E_G)\, \longrightarrow\,
\text{At}(E_G\,,D_0)\,
\stackrel{\sigma}{\longrightarrow}\, \mathrm{T}X_0(-D_0)\,
\longrightarrow\, 0\, ,
\end{equation}
which is called the {\it logarithmic Atiyah exact sequence}.
The above homomorphism $\sigma$ is the restriction of $\mathrm{d}p$ to
$\text{At}(E_G\,,D_0)\,\subset\, \text{At}(E_G)$.

An \emph{infinitesimal deformation} of the triple
$(X_0\, , D_0\, , E_G)$ in \eqref{tr} is a $6$--tuple 
 \begin{equation}\label{RelativeBundle}
({\mathcal X}\, ,q \, , {\mathcal D}\, ,f\, , {\mathcal E}_G\, ,\psi)\, ,
\end{equation}
where
\begin{itemize}
\item $({\mathcal X}\, ,q\, , {\mathcal D}\, ,f)$ in an infinitesimal
deformation of the $n$--pointed curve $(X_0\, , D_0)$ as in (\ref{e1}),

\item ${\mathcal E}_G\, \longrightarrow\,{\mathcal X}$ is a holomorphic principal
$G$--bundle, and

\item $\psi$ is a holomorphic isomorphism
\begin{equation}\label{psi}
\psi\, :\, E_G\, \longrightarrow\, f^*{\mathcal E}_G
\end{equation}
of principal $G$--bundles.
\end{itemize}
The logarithmic Atiyah bundle
$$
\text{At}({\mathcal E}_G\, ,{\mathcal D})\,\longrightarrow\, \mathcal X
$$
for $({\mathcal E}_G\, ,{\mathcal D})$ is the inverse image, in $\text{At}
({\mathcal E}_G)$, of the subsheaf $\mathrm{T}{\mathcal X}(-{\mathcal D}))\,\subset\,
\mathrm{T}{\mathcal X}$. We have the following short exact sequence of sheaves on $X_0$:
\begin{equation}\label{f2}
0\, \longrightarrow\, \text{At}(E_G\, ,D_0)\, \longrightarrow\, f^*\text{At}(
{\mathcal E}_G\, ,{\mathcal D})\, \longrightarrow\, {\mathcal O}_{X_0} \,
\longrightarrow\, 0
\end{equation}
given by $\psi$ in \eqref{psi}. Let
$$
{\mathbb C} \,=\, \mathrm{H}^0(X_0,\, {\mathcal O}_{X_0})\,
\stackrel{\widetilde\phi}{\longrightarrow}\,\mathrm{H}^1(X_0,\, \text{At}(E_G\,,D_0))
$$
be the connecting homomorphism in the long exact sequence of cohomologies associated
to \eqref{f2}. The cohomology element
\begin{equation}\label{e4}
{\widetilde\phi}(1_{X_0})\, \in\, \mathrm{H}^1(X_0,\, \text{At}(E_G\, , D_0))
\end{equation}
is the \emph{cohomology class} of the infinitesimal deformation of the triple
$(X_0\, , D_0\, , E_G)$ given by \eqref{RelativeBundle}. Let
$$
\sigma_*\, :\, \mathrm{H}^1(X_0,\, \text{At}(E_G\,, D_0))\, \longrightarrow\,
\mathrm{H}^1(X_0,\, \mathrm{T}X_0(-D_0))
$$
be the homomorphism given by the projection $\sigma$ in \eqref{e3}. From the
commutativity of the diagram
$$
\begin{matrix}
0 & \longrightarrow & \text{At}(E_G\, ,D_0) & \longrightarrow & f^*\text{At}( 
{\mathcal E}_G\, ,{\mathcal D}) &\longrightarrow & {\mathcal O}_{X_0} 
&\longrightarrow & 0\\
&&\Big\downarrow &&\Big\downarrow && \Vert\\
0 & \longrightarrow & \mathrm{T}X_0(-D_0) & \longrightarrow &
f^*\mathrm{T}{\mathcal X}(-\mathcal{D})& \longrightarrow & {\mathcal O}_{X_0}
&\longrightarrow & 0
\end{matrix}
$$
where the top and bottom rows are as in 
\eqref{f2} and \eqref{ti} respectively, it follows that
$$ 
\sigma_*({\widetilde\phi}(1_{X_0}))\,=\, {\phi}(1_{X_0})\, ,
$$
where ${\widetilde\phi}(1_{X_0})$ and ${\phi}(1_{X_0})$ are
constructed in \eqref{e4} and \eqref{e2} respectively. We note that $\sigma_*$ 
is the forgetful map that sends an infinitesimal deformation of $(X_0\, , D_0\, , 
E_G)$ to the underlying infinitesimal deformation of $(X_0\, , D_0)$ forgetting the
principal $G$--bundle.

\section{Obstruction to extension of a reduction of structure 
group to $P$}\label{se2.2}

Given a holomorphic reduction of structure group of $E_G$ to a parabolic subgroup of
$G$, our aim in this section is to compute the obstruction for
this reduction to extend to a reduction of
an infinitesimal deformation ${\mathcal E}_G\,\longrightarrow\, \mathcal X$ as in 
\eqref{RelativeBundle} (see the paragraph after the proof of Lemma \ref{lem1}).

A parabolic subgroup of $G$ is a connected Zariski closed subgroup $P$ such that
$G/P$ is a complete variety. Fix a parabolic subgroup
$
P\, \subset\, G\, .
$
The Lie algebra of $P$ will be denoted by $\mathfrak p$. As before, $E_G$ is
a holomorphic principal $G$--bundle on $X_0$. Let
\begin{equation}\label{e5}
E_P\, \subset\, E_G
\end{equation}
be a holomorphic reduction of structure group of $E_G$ to the subgroup
$P\,\subset\, G$. Let
$$
\text{ad}(E_P)\, :=\, E_P\times^P{\mathfrak p}\, \longrightarrow\, X_0
$$
be the adjoint vector bundle associated to $E_P$ for the adjoint action of
$P$ on its Lie algebra $\mathfrak p$. The vector bundle over $X_0$ associated to
the principal $P$--bundle $E_P$ for the adjoint action of $P$ on
the quotient ${\mathfrak g}/
{\mathfrak p}$ will be denoted by $E_P({\mathfrak g}/{\mathfrak p})$. So,
$E_P({\mathfrak g}/{\mathfrak p})\,=\, \text{ad}(E_G)/\text{ad}(E_P)$. The
logarithmic Atiyah bundle
for $(E_P\, , D_0)$ will be denoted by $\text{At}(E_P, D_0)$. We have
$$
\text{ad}(E_P)\, \subset\, \text{ad}(E_G)\ \ \text{ and }\ \
\text{At}(E_P, D_0)\, \subset\,\text{At}(E_G\, , D_0)\, ;
$$
both the quotient bundles above are identified with $E_P({\mathfrak g}/{\mathfrak p})$.
In other words, we have the following commutative diagram of vector bundles on $X_0$
\begin{equation}\label{e6}
\begin{matrix}
&& 0 && 0 \\
&& \Big\downarrow && \Big\downarrow\\
0& \longrightarrow & \text{ad}(E_P) & \longrightarrow & \text{At}(E_P\, , D_0) &
\stackrel{\beta}{\longrightarrow} & \mathrm{T}X_0(-D_0) & \longrightarrow & 0\\
&&\Big\downarrow && ~ \Big\downarrow \xi && \Vert\\
0& \longrightarrow & \text{ad}(E_G) & \longrightarrow & \text{At}(E_G\, , D_0) &
\stackrel{\sigma}{\longrightarrow} & \mathrm{T}X_0(-D_0) & \longrightarrow & 0\\
&&\,\,\,\,\,\, \Big\downarrow \mu_1 && ~ \Big\downarrow \mu \\
&& E_P({\mathfrak g}/{\mathfrak p}) & = & E_P({\mathfrak g}/{\mathfrak p})\\
&& \Big\downarrow && \Big\downarrow\\
&& 0 && 0
\end{matrix}
\end{equation}
where $\sigma$ is the homomorphism in \eqref{e3}. Let
\begin{equation}\label{e7}
\widetilde{\xi}\, :\, \mathrm{H}^1(X_0,\, \text{At}(E_P\, , D_0))\, \longrightarrow\,
\mathrm{H}^1(X_0,\,\text{At}(E_G\, , D_0))
\end{equation}
be the homomorphism induced by the canonical injection $\xi$ in \eqref{e6}.

Take any $({\mathcal X}\, ,q \, , {\mathcal D}\, ,f\, , {\mathcal E}_G\, ,\psi)$
as in \eqref{RelativeBundle}. Assume that the reduction $E_P\, \subset\,
E_G$ in \eqref{e5} extends to a holomorphic reduction of structure group
$${\mathcal E}_P\,\, \subset\, \mathcal E_G$$
to $P\,\subset\, G$
on $\mathcal X$. Consider the short exact sequence on $X_0$
\begin{equation}\label{f1}
0\, \longrightarrow\, \text{At}(E_P\, , D_0)\, \longrightarrow\,
f^*\text{At}({\mathcal E}_P\, , \mathcal{D})\,
\longrightarrow\, {\mathcal O}_{X_0}\, \longrightarrow\, 0\, ,
\end{equation}
where $\text{At}({\mathcal E}_P\, , \mathcal{D})\,\longrightarrow
\,{\mathcal X}$ is the logarithmic Atiyah bundle 
associated to the principal $P$--bundle ${\mathcal E}_P$, and
$f$ is the map in \eqref{e1}. Let
\begin{equation}\label{theta}
\theta\, \in\, \mathrm{H}^1(X_0,\, \text{At}(E_P\, , D_0))
\end{equation}
be the image of the constant function $1_{X_0}$ by the homomorphism
$$\mathrm{H}^0(X_0,\, {\mathcal O}_{X_0})\,\longrightarrow\,
\mathrm{H}^1(X_0,\, \text{At}(E_P\, , D_0))$$
in the long exact sequence of cohomologies associated to \eqref{f1}.

\begin{lemma}\label{lem1}
The cohomology class $\theta$ in \eqref{theta} satisfies the equation
$$
\widetilde{\xi}(\theta)\,=\, \widetilde{\phi}(1_{X_0})\, ,
$$
where $\widetilde{\xi}$ and $\widetilde{\phi}(1_{X_0})$ are constructed in \eqref{e7}
and \eqref{e4} respectively.
\end{lemma}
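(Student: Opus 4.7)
The plan is to use naturality of the connecting homomorphism in the long exact sequence of cohomology, applied to a map between two short exact sequences of sheaves on $X_0$.

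First, I would observe that the assumed extension of reductions ${\mathcal E}_P \,\subset\, {\mathcal E}_G$ on $\mathcal X$ induces, just as in diagram \eqref{e6} but now in the relative setting over $\mathcal X$, a canonical inclusion of logarithmic Atiyah bundles $\text{At}({\mathcal E}_P\, ,{\mathcal D}) \,\hookrightarrow\, \text{At}({\mathcal E}_G\, ,{\mathcal D})$, sitting over the identity on $\mathrm{T}{\mathcal X}(-{\mathcal D})$. Pulling back along $f$ via $\psi$ and its restriction to the reduction, one obtains a commutative diagram of short exact sequences on $X_0$:
$$
\begin{matrix}
0 & \longrightarrow & \text{At}(E_P\, ,D_0) & \longrightarrow & f^*\text{At}({\mathcal E}_P\, ,{\mathcal D}) & \longrightarrow & {\mathcal O}_{X_0} & \longrightarrow & 0\\
&& \Big\downarrow \xi && \Big\downarrow && \Big\Vert \\
0 & \longrightarrow & \text{At}(E_G\, ,D_0) & \longrightarrow & f^*\text{At}({\mathcal E}_G\, ,{\mathcal D}) & \longrightarrow & {\mathcal O}_{X_0} & \longrightarrow & 0
\end{matrix}
$$
whose top row is \eqref{f1} and bottom row is \eqref{f2}, with the middle vertical arrow obtained by applying $f^*$ to the Atiyah inclusion just described. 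The left square commutes by the very definition of $\xi$ in \eqref{e6}; the right square commutes because the quotient map onto ${\mathcal O}_{X_0}$ in both rows is obtained by composing the Atiyah projection with the identification of the conormal bundle of ${\mathcal X}_0 \subset {\mathcal X}$ with ${\mathcal O}_{X_0}$, and this identification does not see the group $G$ or $P$.

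Next, I would apply the long exact sequence of cohomology to each row. Naturality of the connecting homomorphism then yields a commutative square
$$
\begin{matrix}
\mathrm{H}^0(X_0,\, {\mathcal O}_{X_0}) & \stackrel{\delta_P}{\longrightarrow} & \mathrm{H}^1(X_0,\, \text{At}(E_P\, ,D_0)) \\
\Big\Vert && \Big\downarrow \widetilde{\xi} \\
\mathrm{H}^0(X_0,\, {\mathcal O}_{X_0}) & \stackrel{\delta_G}{\longrightarrow} & \mathrm{H}^1(X_0,\, \text{At}(E_G\, ,D_0))
\end{matrix}
$$
in which $\delta_P(1_{X_0}) \,=\, \theta$ by the definition \eqref{theta} and $\delta_G(1_{X_0}) \,=\, \widetilde{\phi}(1_{X_0})$ by \eqref{e4}. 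Chasing $1_{X_0}$ through the square gives $\widetilde{\xi}(\theta) \,=\, \widetilde{\phi}(1_{X_0})$, which is the asserted equality.

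The only nontrivial point in this plan is the construction of the middle vertical arrow: one must check that the inclusion ${\mathcal E}_P \,\subset\, {\mathcal E}_G$ genuinely induces an inclusion of the \emph{logarithmic} Atiyah bundles on $\mathcal X$ and that its pullback along $f$ restricts on $X_0$ to $\xi$. This is, however, purely formal: the logarithmic Atiyah bundle is defined as the preimage of $\mathrm{T}{\mathcal X}(-{\mathcal D})$ under the anchor of the ordinary Atiyah bundle, and both anchors are compatible with the inclusion, so the argument reduces to a diagram chase identical in shape to \eqref{e6}. I therefore do not foresee any serious obstacle; the content of the lemma is essentially that the two deformation classes $\theta$ and $\widetilde{\phi}(1_{X_0})$ agree through the forgetful map $\widetilde{\xi}$, which is exactly what naturality of $\delta$ delivers.
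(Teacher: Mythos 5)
Your proposal is correct and is essentially the paper's own argument: the commutative diagram you construct is precisely the diagram \eqref{f5} used in the paper's proof, and the conclusion follows, as you say, by naturality of the connecting homomorphism applied to $1_{X_0}$. No further comment is needed.
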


\begin{proof}
Consider the commutative diagram of vector bundles on $X_0$
\begin{equation}\label{f5}
\begin{matrix}
0& \longrightarrow & \text{At}(E_P\, , D_0) & \longrightarrow & 
f^*\text{At}({\mathcal E}_P\, , \mathcal{D}) & 
\longrightarrow & {\mathcal O}_{X_0} & \longrightarrow & 0\\ 
&&~ \Big\downarrow\xi && \Big\downarrow && \Vert\\
0& \longrightarrow & \text{At}(E_G\, , D_0) & 
\longrightarrow & f^*\text{At}({\mathcal E}_G\, , \mathcal{D}) & {\longrightarrow} & 
{\mathcal O}_{X_0} & \longrightarrow & 0
\end{matrix}
\end{equation}
where the top and bottom rows are as in \eqref{f1} and \eqref{f2} respectively,
and $\xi$ is the homomorphism in \eqref{e6}; the above homomorphism
$f^*\text{At}({\mathcal E}_P\, , \mathcal{D})\,\longrightarrow\,
f^*\text{At}({\mathcal E}_G\, , \mathcal{D})$ is the pullback of the
natural homomorphism $\text{At}({\mathcal E}_P\, , \mathcal{D})\,\longrightarrow\,
\text{At}({\mathcal E}_G\, , \mathcal{D})$.
In view of \eqref{f5}, the lemma follows by comparing the constructions of
$\theta$ and $\widetilde{\phi}(1_{X_0})$.
\end{proof}

Consider the homomorphism $\mu_*\, :\,
\mathrm{H}^1(X_0,\,\text{At}(E_G\, , D_0))\,\longrightarrow\,
\mathrm{H}^1(X_0,\, E_P({\mathfrak g}/{\mathfrak p}))$ induced by the
homomorphism $\mu$ in \eqref{e6}. From Lemma \ref{lem1} we conclude that
$\mu_*(\widetilde{\phi}(1_{X_0}))\,=\, 0$; to see this consider the long exact
sequence of cohomologies associated to the right vertical exact sequence
in \eqref{e6}. Therefore, $\mu_*(\widetilde{\phi}(1_{X_0}))$ is an
obstruction for the reduction $E_P\, \subset\, E_G$ to extend to a reduction
of ${\mathcal E}_G$ to $P$.

\section{Logarithmic connections and the second fundamental form}

In this section we characterize those infinitesimal deformations of the 
principal bundle $E_G$ on the $n$--pointed curve that arise from the isomonodromic
deformations.

\subsection{Canonical extension of a logarithmic connection}\label{SecConnections}

As before, let $p\,:\, E_G\, \longrightarrow\, X_0$ be a holomorphic principal $G$--bundle.
A \textit{logarithmic connection} on $E_G$ with polar divisor $D_0$ is a holomorphic
splitting of the logarithmic Atiyah exact
sequence in \eqref{e3}. In other words, a logarithmic connection is a homomorphism
\begin{equation}\label{de}
\delta\, :\, \mathrm{T}X_0(-D_0)\, \longrightarrow\, \text{At}(E_G\, , D_0)
\end{equation}
such that $\sigma\circ\delta\,=\, \text{Id}_{\mathrm{T}X_0(-D_0)}$, where
$\sigma$ is the homomorphism in \eqref{e3}. Note that given such a $\delta$, there
is a unique homomorphism
\begin{equation}\label{depp}
\delta''\, :\, \text{At}(E_G\, , D_0)\, \longrightarrow\,
\text{ad}(E_G)
\end{equation}
such that $\delta''\circ\delta\,=\, 0$, and the composition
$$
\text{ad}(E_G)\, \hookrightarrow\,\text{At}(E_G\, , D_0)\,
\stackrel{\delta''}{\longrightarrow}\, \text{ad}(E_G)
$$
(see \eqref{e3}) is the identity map of $\text{ad}(E_G)$.
As there are no nonzero $(2\, ,0)$--forms on $X_0$, all
logarithmic connections on $E_G$ are automatically integrable.

At the level of first order infinitesimal deformations, given a principal $G$--bundle
$$
{\mathcal E}_G\, \longrightarrow \,{\mathcal X}\, \stackrel{q}{\longrightarrow} \,B\, ,
$$
a logarithmic connection on ${\mathcal E}_G$ with polar divisor ${\mathcal D}$
is a homomorphism $\text{At}({\mathcal E}_G\, , {\mathcal D})\,\longrightarrow
\, \text{ad}({\mathcal E}_G)$ that splits the logarithmic Atiyah exact sequence
for ${\mathcal E}_G$. We note that a connection on ${\mathcal E}_G$
need not be integrable, as we have added an (infinitesimal) extra dimension.
 However, the Riemann--Hilbert correspondence for principal
$G$--bundles yields the following:

\begin{lemma}\label{LemRH}
Let $({\mathcal X}\, ,q \, , {\mathcal D}\, ,f)$
be an infinitesimal deformation of $(X_0\, ,D_0)$ as in \eqref{e1}. Let
$\delta$ be a logarithmic connection on a holomorphic principal $G$--bundle $E_G$
on $X_0$ with polar divisor $D_0$. Then there exists a unique pair
$(\mathcal{E}_G\, , \nabla)$, where
\begin{itemize}
\item $\mathcal{E}_G$ is a holomorphic principal $G$--bundle on ${\mathcal X}$, and

\item $\nabla$ is an integrable logarithmic connection on $\mathcal{E}_G$
with polar divisor ${\mathcal D}$,
\end{itemize}
such that $(f^*\mathcal{E}_G\, ,f^*\nabla)\,=\, (E_G\, , \delta)$.
\end{lemma}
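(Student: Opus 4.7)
The plan is to construct $(\mathcal{E}_G, \nabla)$ via the Riemann--Hilbert correspondence, suitably adapted to this logarithmic and infinitesimal setting.

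First, I would record that the connection $\delta$, automatically integrable on a curve (there are no nonzero $(2,0)$-forms on $X_0$), produces a flat $G$-bundle on $X_0 \setminus D_0$ and, upon choosing a framing of $E_G|_{x_0}$, a monodromy representation $\rho \colon \pi_1(X_0\setminus D_0, x_0) \to G$, well defined up to conjugation. Since $q\colon \mathcal{X}\to B$ is smooth and proper with $B$ topologically a point, the map $f$ realizes $X_0 \setminus D_0$ as a strong deformation retract of $\mathcal{X}\setminus \mathcal{D}$, and so induces an isomorphism $\pi_1(X_0 \setminus D_0, x_0) \xrightarrow{\sim} \pi_1(\mathcal{X}\setminus \mathcal{D}, f(x_0))$; accordingly $\rho$ also defines a representation of $\pi_1(\mathcal{X}\setminus \mathcal{D})$. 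The Riemann--Hilbert correspondence on the complex manifold $\mathcal{X}\setminus \mathcal{D}$ then yields a holomorphic principal $G$-bundle $\mathcal{E}_G^\circ$ with flat holomorphic connection $\nabla^\circ$ on $\mathcal{X}\setminus \mathcal{D}$, unique up to canonical isomorphism once one identifies $(f^*\mathcal{E}_G^\circ, f^*\nabla^\circ)$ with $(E_G, \delta)|_{X_0 \setminus D_0}$.

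Second, I would extend $(\mathcal{E}_G^\circ, \nabla^\circ)$ across the smooth divisor $\mathcal{D}$ to a logarithmic connection on all of $\mathcal{X}$. Locally near a component $\mathcal{D}_i$, a logarithmic extension of the flat bundle is specified by a choice of residue in $\text{ad}(\mathcal{E}_G)|_{\mathcal{D}_i}$ whose formal exponential realizes the local monodromy around $\mathcal{D}_i$. I would lift $\text{res}_{x_i}(\delta) \in \text{ad}(E_G)_{x_i}$ to a section over the first-order thickening $\mathcal{D}_i$, the first-order part of this lift being forced by the integrability of $\nabla$ together with the requirement $f^*\nabla = \delta$, via a direct formal-coordinate computation using $\epsilon^2=0$. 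Gluing these local logarithmic extensions to $(\mathcal{E}_G^\circ, \nabla^\circ)$ produces the pair $(\mathcal{E}_G, \nabla)$; integrability on all of $\mathcal{X}$ is automatic since the curvature, a section of $\text{ad}(\mathcal{E}_G)\otimes \Omega^2_{\mathcal{X}}(\log \mathcal{D})$, vanishes on the dense open $\mathcal{X}\setminus \mathcal{D}$.

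For uniqueness, any two candidates agree on $\mathcal{X}\setminus \mathcal{D}$ by Riemann--Hilbert (both have monodromy $\rho$) and have residues along $\mathcal{D}$ both restricting to $\text{res}(\delta)$ along $X_0$; since a logarithmic extension of a flat connection across a smooth divisor is rigidified by its residue, the two must coincide. The main obstacle lies in the second step: the local verification that the residue lift dictated by $f^*\nabla = \delta$ is consistent with the Riemann--Hilbert flat structure and yields a genuine logarithmic extension, which reduces to a power-series calculation in a formal coordinate transverse to $\mathcal{D}_i$ and can be carried out explicitly thanks to the infinitesimal base $B$.
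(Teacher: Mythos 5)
Your plan (Riemann--Hilbert on the punctured family, then a logarithmic extension across $\mathcal{D}$ pinned down by the restriction to the central fibre) is genuinely different from the paper's argument, which is a direct \v{C}ech construction: cover $\mathcal{X}$ by product charts $V_j\times B$, take flat charts of $(E_G,\delta)$ away from $D_0$ and small discs around the $x_i$, and extend all transition functions (and the local connection forms) constantly in the deformation parameter. However, as written your first step rests on a misreading of the setting: $B=\mathrm{Spec}(\mathbb{C}[\epsilon]/(\epsilon^2))$ is a non-reduced point, so $\mathcal{X}\setminus\mathcal{D}$ is not a complex manifold but a first-order thickening of $X_0\setminus D_0$ (the underlying topological spaces coincide; there is no deformation retraction to perform). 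The classical Riemann--Hilbert correspondence therefore cannot be quoted verbatim on $\mathcal{X}\setminus\mathcal{D}$. The statement you actually need --- that an integrable connection on the thickening exists for the given monodromy and is uniquely determined by its restriction to the central fibre --- is precisely the infinitesimal rigidity of flat bundles, i.e.\ the content of the lemma away from the polar divisor; it is true, but it has to be argued (for instance via flat local trivializations whose transition functions are constant and hence extend canonically in $\epsilon$, which is exactly the paper's construction), not imported from the manifold case.

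The second weak point is at the divisor. A logarithmic extension of a flat bundle across a smooth divisor is neither ``specified by'' nor ``rigidified by'' its residue: for resonant or unipotent local monodromy there are distinct logarithmic lattices with conjugate residues (elementary modifications), and in any case the residue along $\mathcal{D}_i$ is an endomorphism of the fibre of the very extension you are trying to construct, so it cannot be prescribed beforehand. So the uniqueness argument, as stated, leans on a false general principle, and the existence argument defers exactly the computation that carries the weight. That computation is easy in this first-order setting and is what you should make explicit: in a chart where $\mathcal{D}_i=\{z=0\}\times B$ and $\mathcal{E}_G$ is trivialized, any integrable logarithmic connection restricting to $\delta$ on the central fibre has the form $d+\bigl(a(z)+\epsilon a_1(z)\bigr)\frac{dz}{z}+B(z)\,d\epsilon$ with (using $\epsilon^2=0$, $\epsilon\,d\epsilon=0$) flatness forcing $a_1=z\partial_zB+[a,B]$, and the gauge transformation $1+\epsilon h$ with $h=-B$, which is the identity on the central fibre, carries it to the constant extension $d+a(z)\frac{dz}{z}$; the same normalization on the flat charts gives both existence and uniqueness, replacing the appeal to residue-rigidity. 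With these two repairs your skeleton closes, but at that point it essentially reproduces the paper's ``constant in $\epsilon$'' construction rather than circumventing it.
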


Let us recall a few elements of the proof of this (classical) result.
Choose a covering 
$\mathfrak{U}$ of $X_0\setminus D_0$ by complex discs and a small neighborhood $U_i$ 
for each $x_i \,\in\, D_0$. Since $\delta$ is integrable, we can choose local 
charts for $E_G$ over $\mathfrak{U}$ such that all transition functions are 
constants. Now if the curve fits into an analytic family ${\mathcal X}\,\longrightarrow 
\,{\mathcal B}$, one can, restricting ${\mathcal B}$ if necessary, cover ${\mathcal X}$
by open subsets of the form $V_j\times B$, where $V_j$ are the open subsets of $X_0$
in the collection $\mathfrak{U}\cup \{U_i\}_{i=1}^n$. The isomonodromic deformation is 
then given by simply extending the transition maps by keeping them to be constant in 
deformation parameters.

The logarithmic connection $\delta$ gives a splitting of the 
logarithmic Atiyah bundle
$$ 
\text{At}(E_G\, , D_0)\, =\, \text{ad}(E_G) \oplus \mathrm{T}X_0(-D_0)\, .
$$
The corresponding cohomological decomposition
$$
\mathrm{H}^1(X_0, \text{At}(E_G\, , D_0) )\,=\,
\mathrm{H}^1(X_0,\text{ad}(E_G)) \oplus \mathrm{H}^1(X_0,\mathrm{T}X_0(-D_0))
$$
gives a splitting of the infinitesimal deformations of $(X_0\, ,D_0\, ,E_G)$ into
the infinitesimal deformations of $(X_0\, ,D_0)$ and the infinitesimal deformations
of $E_G$ (keeping $(X_0\, ,D_0)$ fixed). In other words, let
$$
\delta'\,:\, \mathrm{H}^1(X_0,\, \mathrm{T}X_0(-D_0))\, \longrightarrow\,
\mathrm{H}^1(X_0,\, \text{At}(E_G\, , D_0))
$$
be the homomorphism induced by the homomorphism
\begin{equation}\label{de12}
\delta\, :\, \mathrm{T}X_0(-D_0)
\, \longrightarrow\, \text{At}(E_G\, , D_0)
\end{equation}
in Lemma \ref{LemRH} defining the
logarithmic connection on $E_G$. Given an infinitesimal deformation $({\mathcal X}\, ,q
\, , {\mathcal D}\, ,f)$ of $(X_0\, ,D_0)$, the above homomorphism $\delta'$ produces
an infinitesimal deformation $({\mathcal X}\, ,q \, , {\mathcal D}\, ,f\, ,
{\mathcal E}_G\, ,\psi)$ of $(X_0\, ,D_0\, ,E_G)$. As explained above, this holomorphic
principal $G$--bundle ${\mathcal E}_G$ on $\mathcal X$ coincides with the holomorphic
principal $G$--bundle on $\mathcal X$ produced by the isomonodromic deformation in
Lemma \ref{LemRH}.

We will now construct the exact sequence in \eqref{f2} corresponding to the above
infinitesimal deformation $({\mathcal X}\, ,q \, , {\mathcal D}\, ,f\, , {\mathcal E}_G
\, ,\psi)$. Consider the injective homomorphism
$$
\mathrm{T}X_0(-D_0)\, \longrightarrow\, \text{At}(E_G\, , D_0)\oplus
f^*\mathrm{T}\mathcal{X}(-\mathcal{D})\, , ~\ ~ v\, \longmapsto\,
(\delta(v)\, , -(\mathrm{d}f)(v))\, ,
$$
where $\mathrm{d}f$ is the differential in \eqref{ti} and $\delta$
is the homomorphism in \eqref{de12}. The corresponding cokernel
$$
\text{At}^\delta (E_G\, , D_0)\, :=\, (\text{At}(E_G\, , D_0)\oplus
f^*\mathrm{T}\mathcal{X}(-\mathcal{D}))/(\mathrm{T}X_0(-D_0))
$$
possesses a canonical projection
\begin{equation}\label{what}
\widehat{\delta}\, :\, \text{At}^\delta (E_G\, , D_0)\, \longrightarrow\,{\mathcal O}_{X_0}\, , ~\ ~
(v\, ,w)\, \longmapsto\, h(w)\, ,
\end{equation}
where $h$ is the homomorphism in \eqref{ti}; note that the above homomorphism
$\widehat{\delta}$ is
well--defined because $h$ vanishes on the image of $\mathrm{T}X_0(-D_0)$ in $\text{At}(E_G\, , D_0)\oplus
f^*\mathrm{T}\mathcal{X}(-\mathcal{D})$. The kernel of $\widehat{\delta}$ is
identified with $\text{At}(E_G\, , D_0)$ by sending any $z\, \in\,
\text{At}(E_G\, , D_0)$ to the image in $\text{At}^\delta (E_G\, , D_0)$ of
$(z\, ,0)\,\in\, \text{At}(E_G\, , D_0)\oplus f^*\mathrm{T}\mathcal{X}(-\mathcal{D})$.
Therefore, we obtain the following exact sequence of vector bundles over $X_0$:
$$
0\, \longrightarrow\,{\rm At}(E_G\, , D_0)\, \longrightarrow\,{\rm At}^\delta (E_G\, , D_0) \,
\stackrel{\widehat{\delta}}{\longrightarrow}\, {\mathcal O}_{X_0} \,
\longrightarrow\, 0\, ,
$$
This exact sequence coincides with the one in \eqref{f2}. In particular, we have
${\rm At}^\delta (E_G\, , D_0)\,=\,f^*\text{At}({\mathcal E}_G)$.

Consider the projection
$$
\text{At}(E_G\, , D_0)\oplus
f^*\mathrm{T}\mathcal{X}(-\mathcal{D})\, \longrightarrow\, \text{ad}(E_G)\, ,
\ \ (z_1\, ,z_2)\, \longmapsto\, \delta''(z_1)\, ,
$$
where $\delta''$ is constructed in \eqref{depp} from $\delta$. It vanishes
on the image of $\mathrm{T}X_0(-D_0)$, yielding a projection
\begin{equation}\label{ar}
\lambda\, :\, \text{At}^\delta (E_G\, , D_0)\,\longrightarrow\, \text{ad}(E_G)\, .
\end{equation}
Let $\nabla''\, :\, \text{At}({\mathcal E}_G\, , {\mathcal D})\,
\longrightarrow\, \text{ad}({\mathcal E}_G)$ be the homomorphism 
given by the logarithmic connection $\nabla$ in Lemma \ref{LemRH}.
The homomorphism in \eqref{ar} fits in the commutative diagram
\begin{equation}\label{comm}
\begin{matrix}
\text{At}^\delta (E_G\, , D_0)& \stackrel{\lambda}{\longrightarrow} & \text{ad}(E_G)\\
\Vert && \Vert\\
f^*\text{At} ({\mathcal E}_G\, , {\mathcal D})
&\stackrel{f^*\nabla''}{\longrightarrow} & f^*\text{ad}({\mathcal E}_G)
\end{matrix}
\end{equation}
(the vertical identifications are evident).

We summarize the above constructions in the following lemma:

\begin{lemma}\label{lem2}
Given $({\mathcal X}\, ,q\, , {\mathcal D}\, ,f)$ as in \eqref{e1}, and also
given a logarithmic connection
$\delta$ on a holomorphic principal $G$--bundle $E_G\, \longrightarrow\, X_0$, the
exact sequence in \eqref{f2} corresponding to the infinitesimal deformation
of $(X_0\,,D_0\,, E_G)$ in Lemma \ref{LemRH} is
$$
0\, \longrightarrow\,{\rm At}(E_G\, , D_0)\, \longrightarrow\,{\rm At}^\delta
(E_G\, , D_0) \,\stackrel{\widehat{\delta}}{\longrightarrow}\, {\mathcal O}_{X_0}
\, \longrightarrow\, 0\, ,
$$
where $\widehat{\delta}$ is constructed in \eqref{what}.
\end{lemma}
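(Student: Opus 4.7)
The plan is to identify, in a canonical way, the pullback logarithmic Atiyah bundle $f^*\operatorname{At}(\mathcal{E}_G,\mathcal{D})$ with the cokernel $\operatorname{At}^\delta(E_G,D_0)$ defined just above the statement, in such a manner that the quotient map to $\mathcal{O}_{X_0}$ becomes exactly $\widehat{\delta}$. Since the exact sequence \eqref{f2} is determined (up to canonical isomorphism) by $f^*\operatorname{At}(\mathcal{E}_G,\mathcal{D})$ together with its projection to $\mathcal{O}_{X_0}$, establishing this identification proves the lemma. The key input is that, by Lemma \ref{LemRH}, the integrable connection $\nabla$ on $\mathcal{E}_G$ restricts to $\delta$ on $E_G$; equivalently, the splitting $\nabla''$ of the logarithmic Atiyah sequence for $\mathcal{E}_G$ pulls back to the splitting $\delta''$ of \eqref{e3}.

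First, I would construct a natural morphism
\[
\Phi\,:\,\operatorname{At}(E_G,D_0)\oplus f^*\mathrm{T}\mathcal{X}(-\mathcal{D})\,\longrightarrow\, f^*\operatorname{At}(\mathcal{E}_G,\mathcal{D})
\]
as follows. On the first factor, use the canonical inclusion $\operatorname{At}(E_G,D_0)\hookrightarrow f^*\operatorname{At}(\mathcal{E}_G,\mathcal{D})$ that comes from the isomorphism $\psi$ of \eqref{psi} together with the fact that $G$--invariant vector fields on $E_G$ lift to $G$--invariant vector fields on $\mathcal{E}_G$ along $f$ (this is precisely the injection used in constructing \eqref{f2}). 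On the second factor, use the pullback $f^*\nabla$ of the connection from Lemma \ref{LemRH}, viewed as a splitting $\mathrm{T}\mathcal{X}(-\mathcal{D})\to\operatorname{At}(\mathcal{E}_G,\mathcal{D})$ pulled back via $f$.

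Next I would verify that $\Phi$ vanishes on the image of the injection $\mathrm{T}X_0(-D_0)\hookrightarrow\operatorname{At}(E_G,D_0)\oplus f^*\mathrm{T}\mathcal{X}(-\mathcal{D})$ sending $v\mapsto(\delta(v),-\mathrm{d}f(v))$. Indeed, by the compatibility noted above, $f^*\nabla$ applied to $\mathrm{d}f(v)\in f^*\mathrm{T}\mathcal{X}(-\mathcal{D})$ coincides (under the identification induced by $\psi$) with $\delta(v)\in\operatorname{At}(E_G,D_0)\subset f^*\operatorname{At}(\mathcal{E}_G,\mathcal{D})$, so the two contributions cancel. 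Hence $\Phi$ factors through $\operatorname{At}^\delta(E_G,D_0)$, giving a morphism $\overline{\Phi}\,:\,\operatorname{At}^\delta(E_G,D_0)\to f^*\operatorname{At}(\mathcal{E}_G,\mathcal{D})$. A rank count ($1+\dim G$ on both sides) together with the fact that $\overline{\Phi}$ restricts to the identity on the kernel $\operatorname{At}(E_G,D_0)$ of both projections to $\mathcal{O}_{X_0}$ implies that $\overline{\Phi}$ is an isomorphism. Compatibility of $\overline{\Phi}$ with the projections $\widehat{\delta}$ and the projection from \eqref{f2} to $\mathcal{O}_{X_0}$ is immediate from the definition \eqref{what} combined with \eqref{ti}. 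The commutativity of \eqref{comm} then follows because under $\overline{\Phi}$ the map $\lambda$ in \eqref{ar}, which by definition is $(z_1,z_2)\mapsto\delta''(z_1)$, corresponds precisely to $f^*\nabla''$ on $f^*\operatorname{At}(\mathcal{E}_G,\mathcal{D})$, since $\nabla''$ extends $\delta''$ and vanishes on the horizontal lift $\nabla(\mathrm{T}\mathcal{X}(-\mathcal{D}))$.

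The main obstacle is making the cancellation $f^*\nabla\circ \mathrm{d}f=\delta$ rigorous. While this is morally the defining property of the isomonodromic extension $\nabla$ produced by Lemma \ref{LemRH}, one must unwind the local construction recalled there (trivializations of $\mathcal{E}_G$ with locally constant transition functions on $V_j\times B$) to see that horizontal lifts of tangent vectors along the zero section agree with the horizontal lifts prescribed by $\delta$ on $X_0$. Once this compatibility is explicit, the rest of the argument is a diagram chase, and everything assembled above yields Lemma \ref{lem2}.
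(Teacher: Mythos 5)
Your proposal is correct and follows essentially the same route as the paper: the paper's own justification is precisely the construction preceding the lemma, where $\mathrm{At}^\delta(E_G,D_0)$ is built as the cokernel of $v\mapsto(\delta(v),-\mathrm{d}f(v))$ and identified with $f^*\mathrm{At}(\mathcal{E}_G,\mathcal{D})$ using the defining property $(f^*\mathcal{E}_G,f^*\nabla)=(E_G,\delta)$ of the isomonodromic extension from Lemma \ref{LemRH} (compare the identification used in \eqref{comm}). You merely make the identifying isomorphism explicit via the splitting $f^*\nabla$, which is a harmless elaboration of the same argument.
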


\subsection{The second fundamental form}\label{SecFundForm}

Fix a logarithmic connection $\delta$ on $(E_G\,,D_0)$ as in \eqref{de}. Take a
holomorphic reduction of structure group $E_P\,\subset\, E_G$ to $P$ as in
\eqref{e5}. The composition
\begin{equation}\label{sff}
S(\delta)\,:=\,
\mu\circ\delta\, :\, \mathrm{T}X_0(-D_0)\,\longrightarrow\, E_P({\mathfrak g}/{\mathfrak p})\, ,
\end{equation}
where $\mu$ is constructed in \eqref{e6}, is called the \textit{second fundamental
form} of $E_P$ for the connection $\delta$. We note that $\delta$ is induced by
a logarithmic connection on the holomorphic principal $P$--bundle $E_P$ if and only if
we have $S(\delta)\,=\, 0$.

Assume that $E_P$ satisfies the condition that
$
S(\delta)\,\not=\, 0\, .
$
Let
\begin{equation}\label{l}
{\mathcal L}\, \subset\, E_P({\mathfrak g}/{\mathfrak p})
\end{equation}
be the holomorphic line subbundle generated by the image of the homomorphism
$S(\delta)$ in \eqref{sff}. More
precisely, ${\mathcal L}$ is the inverse image, in $E_P({\mathfrak g}/{\mathfrak p})$, of
the torsion part of the quotient $E_P({\mathfrak g}/{\mathfrak p})/(S(\delta)(\mathrm{T}X_0(-D_0)))$.
Now consider the homomorphism
\begin{equation}\label{sdp}
S(\delta)' \,:\, \mathrm{T}X_0(-D_0)\,\longrightarrow\,
{\mathcal L}
\end{equation}
given by the second fundamental form $S(\delta)$. Let
\begin{equation}\label{sp}
S'\, :\, \mathrm{H}^1(X_0,\, \mathrm{T}X_0(-D_0))\,\longrightarrow\,
\mathrm{H}^1(X_0,\, {\mathcal L})
\end{equation}
be the homomorphism of cohomologies induced by $S(\delta)'$ in \eqref{sdp}.

\begin{proposition}\label{prop1}
As before, let $\delta$ be a logarithmic connection on $E_G\,\longrightarrow\, X_0$ with
polar divisor $D_0$, and let $({\mathcal X}\, ,q \, , {\mathcal D}\, ,f)$ be an
infinitesimal deformation of $(X_0,D_0)$. Let ${\mathcal E}_G\,\longrightarrow\,{\mathcal
X}$ be the isomonodromic deformation of $\delta$ along $(\mathcal{X}\, ,
\mathcal{D})$ obtained in Lemma \ref{LemRH}. Let $E_P\, \subset\, E_G$ be a holomorphic
reduction of structure group to $P$ over $X_0$ that extends to a holomorphic
reduction of structure group ${\mathcal E}_P\, \subset\, {\mathcal E}_G$ to $P$ over
${\mathcal X}$. Then
$$
S'(\phi(1_{X_0}))\,=\, 0\, ,
$$
where $\phi(1_{X_0})$ is the cohomology class constructed in \eqref{e2}
corresponding to $({\mathcal X}\, ,q \, , {\mathcal D}\, ,f)$,
and $S'$ is constructed in \eqref{sp}.
\end{proposition}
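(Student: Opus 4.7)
The plan is to prove $S'(\phi(1_{X_0})) = 0$ by a refined diagram chase combining Lemmas~\ref{lem1} and~\ref{lem2}. The starting observation is that the splitting $\delta \colon TX_0(-D_0) \to \mathrm{At}(E_G, D_0)$ factors through the subsheaf
\[
\mathrm{At}'(E_G, D_0) := \mu^{-1}(\mathcal{L}) \subset \mathrm{At}(E_G, D_0),
\]
because $\mu \circ \delta = S(\delta)$ has image contained in $\mathcal{L}$ by the definition of $\mathcal{L}$. The inclusion $\xi$ from diagram~\eqref{e6} also factors through $\mathrm{At}'(E_G, D_0)$, since $\mu \circ \xi = 0$. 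This produces a refined short exact sequence on $X_0$,
\[
0 \to \mathrm{At}(E_P, D_0) \xrightarrow{\xi'} \mathrm{At}'(E_G, D_0) \xrightarrow{\mu'} \mathcal{L} \to 0,
\]
in which $\mu' \circ \delta = S(\delta)'$ by construction.

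I would then refine Lemma~\ref{lem1} and Lemma~\ref{lem2} to this setting. Using the extended reduction $\mathcal{E}_P \subset \mathcal{E}_G$, one can construct a corresponding subsheaf $\mathrm{At}'(\mathcal{E}_G, \mathcal{D}) \subset \mathrm{At}(\mathcal{E}_G, \mathcal{D})$ on $\mathcal{X}$ whose pullback $f^*\mathrm{At}'(\mathcal{E}_G, \mathcal{D})$ sits in a short exact sequence refining \eqref{f2} with $\mathrm{At}(E_G, D_0)$ replaced by $\mathrm{At}'(E_G, D_0)$. Repeating the diagram-chase in Lemma~\ref{lem1} and the construction described after Lemma~\ref{LemRH} within this refined framework yields the identity
\[
\widetilde{\xi}'(\theta) = \delta_*(\phi(1_{X_0})) \quad \text{in} \quad H^1(X_0, \mathrm{At}'(E_G, D_0)),
\]
where $\widetilde{\xi}' \colon H^1(X_0, \mathrm{At}(E_P, D_0)) \to H^1(X_0, \mathrm{At}'(E_G, D_0))$ is induced by the refined inclusion, and $\delta_*$ is induced by $\delta$ now viewed as mapping into $\mathrm{At}'(E_G, D_0)$.

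Once this refined identity is established, I apply the induced map $\mu'_* \colon H^1(X_0, \mathrm{At}'(E_G, D_0)) \to H^1(X_0, \mathcal{L})$ to both sides. On the left, $\mu'_*(\widetilde{\xi}'(\theta)) = 0$ by exactness of the refined sequence, since $\widetilde{\xi}'$ factors through $\ker(\mu'_*)$. On the right, $\mu'_*(\delta_*(\phi(1_{X_0}))) = (S(\delta)')_*(\phi(1_{X_0})) = S'(\phi(1_{X_0}))$, by the identity $\mu' \circ \delta = S(\delta)'$ noted above. Hence $S'(\phi(1_{X_0})) = 0$, as required.

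The main obstacle is the refinement step: one must construct $\mathrm{At}'(\mathcal{E}_G, \mathcal{D})$ on $\mathcal{X}$ in a way that pulls back correctly to $\mathrm{At}'(E_G, D_0)$ on $X_0$, and then verify that the canonical lifts of $\theta$ and of the isomonodromic class to $H^1(X_0, \mathrm{At}'(E_G, D_0))$ genuinely coincide (not just after pushing forward to $H^1(X_0, \mathrm{At}(E_G, D_0))$, which is immediate from Lemma~\ref{lem1}). A direct alternative would be a \v{C}ech cocycle computation along the lines of the proof of Lemma~\ref{LemRH}: represent $\phi(1_{X_0})$ by a cocycle $\{\nu_{ij}\}$ in $TX_0(-D_0)$, use the locally constant transitions of $\mathcal{E}_G$ arising from the isomonodromic construction together with first-order Taylor expansions of the extended sections defining $\mathcal{E}_P$ to obtain local sections $r_j$ with $S(\delta)(\nu_{ij}) = g_{ij} r_i - r_j$, and show by exploiting $\mathcal{E}_P$ that these $r_j$ may be chosen in $\mathcal{L}$, exhibiting $\{S(\delta)'(\nu_{ij})\}$ as a $\mathcal{L}$-valued coboundary.
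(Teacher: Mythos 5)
Your proposal is correct and follows essentially the same route as the paper: your $\mathrm{At}'(E_G,D_0)=\mu^{-1}(\mathcal L)$ is the paper's $\mathrm{At}_P(E_G,D_0)$, your refined identity $\widetilde{\xi}'(\theta)=\delta_*(\phi(1_{X_0}))$ is exactly \eqref{id}, and the concluding application of $\mu'_*$ together with $\mu\circ\xi=0$ is the paper's final step. The ``refinement step'' you flag as the main obstacle is carried out in the paper via the sheaf $\mathrm{At}^\delta_P(E_G,D_0)=\mu_2^{-1}(\mathcal L)$ and the diagrams \eqref{dia} and \eqref{g5}, precisely as you outline.
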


\begin{proof}
Consider the inverse images
$$
\text{At}_P(E_G\, , D_0)\,:=\, \mu^{-1}({\mathcal L})\,\subset\, \text{At}(E_G\, , D_0)
\ \ \text{ and} \ \ \
\text{ad}_P(E_G )\,:=\, \mu^{-1}_1({\mathcal L})\,\subset\, \text{ad}(E_G)\, ,
$$
where $\mu$ and $\mu_1$ are the quotient maps in \eqref{e6}, and
$\mathcal L$ is constructed in \eqref{l}. These two vector bundles fit in the
following commutative diagram produced from \eqref{e6}:
\begin{equation}\label{f6}
\begin{matrix}
&& 0 && 0 \\
&& \Big\downarrow && \Big\downarrow\\
0& \longrightarrow & \text{ad}(E_P) & \longrightarrow & \text{At}(E_P\, , D_0) &
\stackrel{\beta}{\longrightarrow} & \mathrm{T}X_0(-D_0) & \longrightarrow & 0\\
&&\Big\downarrow && ~ \Big\downarrow \xi && \Vert\\
0& \longrightarrow & \text{ad}_P(E_G) & \longrightarrow & \text{At}_P(E_G\, , D_0) &
\stackrel{\gamma}{\longrightarrow} & \mathrm{T}X_0(-D_0) & \longrightarrow & 0\\
&& \,\,\,\,\, \Big\downarrow \mu_1 && ~ \Big\downarrow \mu \\
&& {\mathcal L} & = & {\mathcal L}\\
&& \Big\downarrow && \Big\downarrow\\
&& 0 && 0
\end{matrix}
\end{equation}
By the construction of $\text{At}_P(E_G\, , D_0)$, the
connection homomorphism $\delta$ in \eqref{de12} factors through a homomorphism
$$
\delta^1\, :\,\mathrm{T}X_0(-D_0)\, \longrightarrow\, \text{At}_P(E_G\, , D_0)\, .
$$
Consider the homomorphism
\begin{equation}\label{de1s}
\delta^1_*\, :\, \mathrm{H}^1(X_0,\, \mathrm{T}X_0(-D_0))\, \longrightarrow\,
\mathrm{H}^1(X_0,\,\text{At}_P(E_G\, , D_0))
\end{equation}
induced by the above homomorphism $\delta^1$, and let
\begin{equation}\label{tau}
\Phi\,:=\, \delta^1_*(\phi(1_{X_0}))\, \in\, \mathrm{H}^1(X_0,\,\text{At}_P(E_G\, , D_0))
\end{equation}
be the image of the cohomology class $\phi(1_{X_0})$ that characterizes the
deformation $(\mathcal{X}\,,\mathcal{D})$ as in \eqref{e2}.

As in the statement of the proposition, let ${\mathcal E}_P\, \longrightarrow\,
{\mathcal X}$ be a holomorphic extension of the
reduction $E_P$. Note that from \eqref{f5}, \eqref{e6} and Lemma \ref{lem2}
we have
$$
\text{At}(E_G\, , D_0)/\text{At}(E_P\, , D_0)\,=\,
E_P({\mathfrak g}/{\mathfrak p})\,=\, (f^*\text{At}({\mathcal E}_G\, ,
{\mathcal D}))/ (f^*\text{At}({\mathcal E}_P\, ,{\mathcal D}))
$$
$$
\,=\, {\rm At}^\delta(E_G\, , D_0)/
f^*\text{At}({\mathcal E}_P\, , \mathcal{D})\, .
$$
Let $\mu_2\, :\, {\rm At}^\delta (E_G\, , D_0)\,\longrightarrow\,
E_P({\mathfrak g}/{\mathfrak p})$ be the above quotient map. Define
$$
{\rm At}^\delta_P (E_G\, , D_0)\, :=\, \mu^{-1}_2({\mathcal L})
\, \subset\, {\rm At}^\delta (E_G\, , D_0)\, ,
$$
where $\mathcal L$ is constructed in \eqref{l}.

Now we have the following commutative diagram:
\begin{equation}\label{dia}
\begin{matrix}
0 & \longrightarrow & \mathrm{T}X_0(-D_0) & \longrightarrow &
f^*\mathrm{T}{\mathcal X}(-\mathcal{D})& \longrightarrow & {\mathcal O}_{X_0}
&\longrightarrow & 0\\
&&~\Big\downarrow\delta &&\,\,\,\,\,\,\,\,\, \Big\downarrow f^*\nabla && \Vert\\
0& \longrightarrow & \text{At}_P(E_G\, , D_0) &
\longrightarrow & {\rm At}^\delta_P (E_G\, , D_0) & {\longrightarrow} &
{\mathcal O}_{X_0} & \longrightarrow & 0
\end{matrix}
\end{equation}
where the bottom exact sequence is obtained from \eqref{f5}, and the
top exact sequence is as in \eqref{ti} (see also \eqref{comm}). Let
\begin{equation}\label{nu}
\nu\, :\, \mathrm{H}^0(X_0, \, {\mathcal O}_{X_0})\, \longrightarrow\,
\mathrm{H}^1(X_0,\, \text{At}_P(E_G\, , D_0))
\end{equation}
be the connecting homomorphism in the long exact sequence of cohomologies
associated to the bottom exact sequence in \eqref{dia}. From the commutativity
of \eqref{dia} and the construction of $\phi(1_{X_0})$
(see \eqref{e2}) it follows that
\begin{equation}\label{phex}
\nu(1_{X_0})\,=\, \delta^1_*(\phi(1_{X_0}))\,=\, \Phi\, ,
\end{equation}
where $\nu$ and $\delta^1_*$ are the homomorphisms constructed in \eqref{nu}
and \eqref{de1s} respectively, and $\Phi$ is the cohomology class in \eqref{tau}.

The diagram in \eqref{f5} produces the diagram
\begin{equation}\label{g5}
\begin{matrix}
&& 0 && 0 \\
&& \Big\downarrow && \Big\downarrow\\
0& \longrightarrow & \text{At}(E_P\, , D_0) & \longrightarrow & 
f^*\text{At}({\mathcal E}_P\, , D_0) &
\longrightarrow & {\mathcal O}_{X_0} & \longrightarrow & 0\\
&&~ \Big\downarrow\xi && \Big\downarrow && \Vert\\
0& \longrightarrow & \text{At}_P(E_G\, , D_0) &
\longrightarrow & {\rm At}^\delta_P (E_G\, , D_0) & {\longrightarrow} &
{\mathcal O}_{X_0} & \longrightarrow & 0\\
&&~ \Big\downarrow\mu && ~ \Big\downarrow\\
&& {\mathcal L} & = & {\mathcal L}\\
&& \Big\downarrow && \Big\downarrow\\
&& 0 && 0
\end{matrix}
\end{equation}
where $\xi$ and $\mu$ are the homomorphisms in \eqref{f6}. Using this
diagram we can check that
\begin{equation}\label{id}
\Phi\,=\, \xi_*(\theta)\, ,
\end{equation}
where $\theta$ is the cohomology classes in \eqref{theta}, and
$$
\xi_*\, :\, \mathrm{H}^1(X_0,\, \text{At}(E_P\, , D_0))\, \longrightarrow\, 
\mathrm{H}^1(X_0,\,\text{At}_P(E_G\, , D_0))
$$
is the homomorphism induced by $\xi$ in \eqref{g5}. Indeed, \eqref{id}
follows from \eqref{phex}, the commutativity of \eqref{g5}
and the construction of $\theta$.

Let $\mu_*\, :\, \mathrm{H}^1(X_0,\, \text{At}_P(E_G\, , D_0))\,\longrightarrow\,
\mathrm{H}^1(X_0,\, {\mathcal L})$ be the homomorphism induced by the homomorphism $\mu$
in \eqref{g5}. It is straight--forward to check that
$$\mu_*(\Phi)\,=\, S'(\phi(1_{X_0}))$$ (see \eqref{sp}, \eqref{e2} and \eqref{tau}
for $S'$, $\phi(1_{X_0})$ and $\Phi$ respectively). Indeed, this follows from
\eqref{phex} and the definition of $S(\delta)$ in \eqref{sff}. Combining this
with \eqref{id}, we have
$$
S'(\phi(1_{X_0}))\,=\, \mu_*(\Phi)\,=\, \mu_*(\xi_*(\theta))\, .
$$
Since $\mu\circ\xi\,=\, 0$ (see \eqref{g5}), it now follows that
$S'(\phi(1_{X_0}))\,=\,0$.
\end{proof}

\section{Logarithmic connections and semistability of the underlying
principal bundle}\label{SecMain}

Let $\mathcal{T}_{g,n}$ be the Teichm\"uller space
of genus $g$ compact Riemann surfaces with $n$ ordered marked points. As before,
we assume that $g\, >\, 0$, and if $g\,=\,1$, then $n\, >\, 0$.
Let
$$
\tau\, :\, \mathcal{C} \,\longrightarrow\, \mathcal{T}_{g,n}
$$
be the universal Teichm\"uller curve with $n$ ordered sections $\Sigma$. The fiber
of $\mathcal{C}$ over any point $t\,\in\, \mathcal{T}_{g,n}$ will be denoted by
${\mathcal C}_t$. The ordered subset ${\mathcal C}_t\cap \Sigma\,\subset\,
{\mathcal C}_t$ will be denoted by $\Sigma_t$.

Take a $n$--pointed Riemann surface $(C_0\, ,\Sigma_0)$ of genus $g$,
which is represented by
a point of $\mathcal{T}_{g,n}$. Let
\begin{equation}\label{na0}
\nabla_0
\end{equation}
be a logarithmic connection on a holomorphic principal $G$--bundle $F_G\,\longrightarrow
\, C_0$ with polar divisor $\Sigma_0$. By the Riemann--Hilbert correspondence, the
connection $\nabla_0$ produces a flat 
(isomonodromic) logarithmic connection $\nabla$ on a holomorphic principal
$G$--bundle $\mathcal{F}_G\,\longrightarrow\, \mathcal{C}$ with polar
divisor $\Sigma$.

The following lemma is a special case of the main theorem in \cite{Nitsure} (see 
also \cite{Sh}). Although the families of $G$--bundles considered in \cite{Nitsure} 
are algebraic, all arguments there go through in the analytic case of our 
interest with obvious modifications.

\begin{lemma}[\cite{Nitsure}] \label{lemNitsure}
Let $\mathcal{F}_G\,\longrightarrow\, \mathcal{C}\,\longrightarrow\,\mathcal{T}_{g,n}$
be as above. For each Harder--Narasimhan type $\kappa$, the set
$$
\mathcal{Y}_\kappa \,:=\, \{t\,\in\, \mathcal{T}_{g,n} ~\mid ~
\mathcal{F}_G\vert_{{\mathcal C}_t}\  \text{ is\ of\ type }\ \kappa\}
$$
is a (possibly empty) locally closed complex analytic subspace of $\mathcal{T}_{g,n}$.
More precisely, for each Harder--Narasimhan type $\kappa$, the union 
$\mathcal{Y}_{\leq \kappa}\,:=\,\bigcup_{\kappa'\leq \kappa}\mathcal{Y}_{\kappa'}$ is
a closed complex analytic subset of $\mathcal{T}_{g,n}$.
Moreover, the principal $G$--bundle
$$
\mathcal{F}_G\vert_{\tau^{-1}(\mathcal{Y}_\kappa)}\,\longrightarrow\,
\tau^{-1}(\mathcal{Y}_\kappa)
$$
possesses a canonical holomorphic 
reduction of structure group inducing the Harder--Narasimhan
reduction of $\mathcal{F}_G\vert_{{\mathcal C}_t}$ for every $t\,\in \,\mathcal{Y}_\kappa$.
\end{lemma}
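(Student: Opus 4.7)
My plan is to adapt Nitsure's schematic Harder--Narasimhan stratification theorem to the analytic setting of the Teichm\"uller base, combined with the formalism of canonical reductions of principal $G$--bundles due to Atiyah--Bott and Ramanathan. Recall that on a fixed smooth projective curve, every principal $G$--bundle $E_G$ admits a unique reduction $E_P \subset E_G$ to a parabolic $P$, the canonical (Harder--Narasimhan) reduction, characterized by semistability of the associated Levi bundle together with a positivity condition on the degrees of line bundles $E_P(\chi)$ for suitable characters $\chi$ of $P$. The discrete invariant $\kappa$ records the conjugacy class of $P$ and these degrees, and the partial order on HN types is the one induced by dominance of the corresponding Harder--Narasimhan polygons.

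For the semicontinuity assertion, I would compose with a faithful algebraic representation $G \hookrightarrow \mathrm{GL}(V)$ to obtain an associated family of vector bundles $\mathcal{F}_V \to \mathcal{C}$, and invoke the Shatz--Nitsure upper semicontinuity theorem for the HN polygon of a flat family of vector bundles (valid analytically as well as algebraically). Ramanathan's reconstruction of the principal HN reduction from the associated vector bundle HN filtration then yields that $\mathcal{Y}_{\leq \kappa}$ is a closed analytic subset of $\mathcal{T}_{g,n}$. Since $\mathcal{Y}_\kappa \,=\, \mathcal{Y}_{\leq \kappa} \setminus \bigcup_{\kappa' < \kappa}\mathcal{Y}_{\leq \kappa'}$ is the difference of two such closed analytic subsets, it is locally closed analytic.

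For the canonical relative reduction over $\tau^{-1}(\mathcal{Y}_\kappa)$, I would work with the relative analytic Douady space of holomorphic sections of the associated flag bundle $\mathcal{F}_G/P \to \mathcal{C}$ restricted over $\mathcal{Y}_\kappa$. The HN reduction on each fiber singles out a distinguished point of this Douady space, and uniqueness of the canonical reduction on fibers, together with standard base--change arguments, forces these points to assemble into a single holomorphic section over $\tau^{-1}(\mathcal{Y}_\kappa)$; this section is precisely the desired relative reduction of structure group.

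The main obstacle, and the only genuine modification of Nitsure's original algebraic proof, is the replacement of Grothendieck's Quot schemes and algebraic flattening stratifications by their analytic counterparts built on Douady spaces and Grauert's coherence theorem. Once these analytic substitutes are installed and one verifies that the fiberwise stability/positivity conditions cut out coherent analytic ideals, the structural part of Nitsure's argument transfers essentially verbatim, which is why the authors can legitimately invoke \cite{Nitsure} with only the remark about obvious analytic modifications.
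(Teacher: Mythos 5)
The paper does not actually prove this lemma: it is quoted from the main theorem of \cite{Nitsure} (see also \cite{Sh}), with only the remark that the algebraic arguments carry over to the analytic setting, so your proposal can only be compared with that cited source. Your route is genuinely different from Gurjar--Nitsure's, who work directly with families of principal bundles (and $\Lambda$-modules) rather than passing through a faithful representation $G\hookrightarrow \mathrm{GL}(V)$, and it is precisely at that reduction step that you have a gap. To deduce closedness of $\mathcal{Y}_{\leq\kappa}$ from Shatz--Nitsure semicontinuity for the associated family $\mathcal{F}_V$, you need the assignment (principal HN type $\kappa$) $\mapsto$ (HN polygon of $E(V)$) to be compatible with the two partial orders in both directions, or at least that the preimage of a ``polygon $\leq$'' locus is a ``type $\leq\kappa$'' locus; in characteristic zero this rests on the fact that the canonical reduction induces the HN filtration of every associated bundle and that the resulting polygon bounds $\kappa$, which is the real content here and is only implicitly asserted in your appeal to ``Ramanathan's reconstruction''. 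One should also record that only finitely many types occur in the family over any compact subset, so that $\mathcal{Y}_\kappa$ is indeed the difference of two closed analytic subsets.

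The second genuine gap is the relative reduction over $\tau^{-1}(\mathcal{Y}_\kappa)$: fiberwise uniqueness of the canonical reduction together with ``standard base--change arguments'' does not by itself produce a \emph{holomorphic} section of $\mathcal{F}_G/P$ over the stratum, since pointwise uniqueness says nothing about holomorphic (or even continuous) variation. What is needed is that the locus in the relative Douady space of sections of $\mathcal{F}_G/P$ parametrizing reductions of numerical type $\kappa$ is proper over $\mathcal{Y}_\kappa$, bijective onto it, and immersive; the last point is a rigidity statement, namely $\mathrm{H}^0({\mathcal C}_t,\, E_P({\mathfrak g}/{\mathfrak p}))\,=\,0$ for the canonical reduction, which follows from $\mu_{\rm max}(E_P({\mathfrak g}/{\mathfrak p}))\,<\,0$ \cite{AAB} --- exactly the fact the paper exploits later in Section \ref{SecMain}. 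With that vanishing, the parameter space maps biholomorphically onto the stratum and the fiberwise reductions glue into the asserted holomorphic reduction (some extra care is needed if the stratum is singular or nonreduced, which is where the schematic formulation of \cite{Nitsure} is stronger than a purely set-theoretic argument). Once these two points are supplied, your outline is a legitimate analytic substitute for the algebraic proof.
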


In the following two Sections \ref{se5.1} and \ref{se5.2}, we will see that under 
certain assumptions, the only Harder-Narasimhan stratum $\mathcal{Y}_\kappa$ of 
maximal dimension $\dim(\mathcal{T}_{g,n})\,=\,3g-3+n$ is the trivial one, in the sense 
that the Harder--Narasimhan parabolic subgroup is $G$ itself. In other words, if 
the principal $G$--bundle $F_G$ is not semistable, and therefore has a non-trivial 
Harder--Narasimhan reduction $E_P\, \subset\, E_G$ to a certain parabolic subgroup 
$P\, \subsetneq\, G$, then there is always an isomonodromic deformation in which 
direction the reduction $E_P$ is obstructed meaning it does not extend.

\subsection{The case of $n = 0$}\label{se5.1}

In this subsection we assume that $n\,=\,0$. So, we have $g\, >\, 1$.

\begin{theorem}\label{prop2}
There is a closed complex analytic subset ${\mathcal Y}\, \subset\,
{\mathcal T}_{g,0}$ of codimension at least $g$
such that for every $t\, \in\, {\mathcal T}_{g,0}\setminus {\mathcal Y}$, the
holomorphic principal $G$--bundle $\mathcal{F}_G\vert_{{\mathcal C}_t}\,
\longrightarrow\, {\mathcal C}_t$ is semistable.
\end{theorem}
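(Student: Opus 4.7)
The plan is to invoke the Harder--Narasimhan stratification of the family $\mathcal{F}_G\,\longrightarrow\,\mathcal{C}\,\longrightarrow\,\mathcal{T}_{g,0}$ from Lemma \ref{lemNitsure}, and bound the codimension of each strictly unstable stratum by $g$ using Proposition \ref{prop1}. The unstable locus $\mathcal{Y}$---being the complement of the open semistable stratum---is a closed complex analytic subset of $\mathcal{T}_{g,0}$, and its codimension at any point coincides with the codimension of the Harder--Narasimhan stratum there, so it suffices to show that every strictly unstable stratum $\mathcal{Y}_\kappa$ has codimension at least $g$.

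Fix such a $\kappa$, let $\mathcal{F}_P\,\subset\,\mathcal{F}_G|_{\tau^{-1}(\mathcal{Y}_\kappa)}$ be the canonical reduction to a proper parabolic $P\,\subsetneq\,G$ supplied by Lemma \ref{lemNitsure}, and fix $t\,\in\,\mathcal{Y}_\kappa$. Put $E_G\,:=\,\mathcal{F}_G|_{\mathcal{C}_t}$, $E_P\,:=\,\mathcal{F}_P|_{\mathcal{C}_t}$, and $\delta\,:=\,\nabla|_{\mathcal{C}_t}$, which is a \emph{holomorphic} connection on $E_G$ because $n\,=\,0$. Any tangent direction $v\,\in\,T_t\mathcal{Y}_\kappa\,\subset\,T_t\mathcal{T}_{g,0}\,=\,\mathrm{H}^1(\mathcal{C}_t,\,\mathrm{T}\mathcal{C}_t)$ is the Kodaira--Spencer class $\phi(1_{\mathcal{C}_t})$ of an infinitesimal deformation of $\mathcal{C}_t$ along which $E_P$ extends (the extension being $\mathcal{F}_P$ itself). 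Proposition \ref{prop1} then forces $v\,\in\,\ker(S')$, where
\[
S'\,:\,\mathrm{H}^1(\mathcal{C}_t,\,\mathrm{T}\mathcal{C}_t)\,\longrightarrow\,\mathrm{H}^1(\mathcal{C}_t,\,\mathcal{L})
\]
is the cohomology map induced by the saturated second fundamental form $S(\delta)'\,:\,\mathrm{T}\mathcal{C}_t\,\longrightarrow\,\mathcal{L}$ of $E_P$ for $\delta$. Consequently $\mathrm{codim}_t\mathcal{Y}_\kappa\,\geq\,\mathrm{rank}(S')$, and the goal reduces to showing $\mathrm{rank}(S')\,\geq\,g$.

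The key observation is that $S(\delta)\,\neq\,0$: otherwise $\delta$ would factor through $\mathrm{At}(E_P)\,\subset\,\mathrm{At}(E_G)$ and restrict to a holomorphic connection on $E_P$, hence induce a holomorphic connection on the line bundle $\det E_P(\mathfrak{g}/\mathfrak{p})$ and force its degree to vanish---contradicting $\mu_{\max}(E_P(\mathfrak{g}/\mathfrak{p}))\,<\,0$, the defining inequality of the Harder--Narasimhan reduction. The same inequality gives $\deg(\mathcal{L})\,\leq\,-1$ for the line subbundle $\mathcal{L}$, while the injective map $\mathrm{T}\mathcal{C}_t\,\hookrightarrow\,\mathcal{L}$ of line bundles forces $\deg(\mathcal{L})\,\geq\,2-2g$. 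By Serre duality, $S'$ is dual to the multiplication map
\[
\mathrm{H}^0(\mathcal{C}_t,\,K_{\mathcal{C}_t}\otimes\mathcal{L}^{-1})\,\stackrel{\cdot S(\delta)'}{\longrightarrow}\,\mathrm{H}^0(\mathcal{C}_t,\,K_{\mathcal{C}_t}^{\otimes 2}),
\]
which is injective since $S(\delta)'$ is a nonzero section of $K_{\mathcal{C}_t}\otimes\mathcal{L}$. Therefore $\mathrm{rank}(S')\,=\,h^0(K_{\mathcal{C}_t}\otimes\mathcal{L}^{-1})$, and since $\deg(K_{\mathcal{C}_t}\otimes\mathcal{L}^{-1})\,=\,(2g-2)-\deg(\mathcal{L})\,\geq\,2g-1\,>\,2g-2$, Riemann--Roch yields $h^0\,=\,\deg+1-g\,\geq\,g$.

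The principal obstacle is the nonvanishing step $S(\delta)\,\neq\,0$, which rests on Atiyah's obstruction to a holomorphic connection on a principal bundle matched against the Harder--Narasimhan inequality $\mu_{\max}(E_P(\mathfrak{g}/\mathfrak{p}))\,<\,0$ for $G$-bundles; the remainder of the argument is a clean combination of two slope bounds and a Riemann--Roch count on a line bundle of degree exceeding $2g-2$.
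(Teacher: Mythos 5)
Your proposal is correct and follows essentially the same route as the paper: the Harder--Narasimhan stratification of Lemma \ref{lemNitsure}, the nonvanishing of the second fundamental form forced by $\deg \mathrm{ad}(E_G)\,=\,0$ against $\mu_{\max}(E_P({\mathfrak g}/{\mathfrak p}))\,<\,0$, and Proposition \ref{prop1} applied to the line subbundle $\mathcal{L}$. The only (equivalent) variation is that you bound $\mathrm{rank}(S')$ via Serre duality and Riemann--Roch on $K_{{\mathcal C}_t}\otimes\mathcal{L}^{-1}$, whereas the paper counts sections of the torsion quotient $\mathcal{L}/S(\delta)(\mathrm{T}X_0)$ in the long exact sequence; both yield the same bound $g$.
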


\begin{proof}
Let ${\mathcal Y}\, \subset\, {\mathcal T}_{g,0}$ denote the (finite) union of all 
Harder-Narasimhan strata ${\mathcal Y}_\kappa$ as in Lemma \ref{lemNitsure} with 
nontrivial Harder-Narasimhan type $\kappa$. From Lemma \ref{lemNitsure} we know
that ${\mathcal Y}$ is a closed complex analytic subset of ${\mathcal T}_{g,0}$.

Take any $t\, \in\, \mathcal{Y}_\kappa\, \subset\, {\mathcal Y}$. Let $E_G\,=\,
\mathcal{F}_G\vert_{{\mathcal C}_t}$ be the holomorphic
principal $G$--bundle on $X_0\,:=\, {\mathcal C}_t$. The holomorphic
connection on $E_G$ obtained by restricting
$\nabla$ will be denoted by $\delta$. Since $E_G$ is not semistable,
there is a proper parabolic subgroup $P\, \subsetneq\, G$ and a holomorphic
reduction of structure group $E_P\, \subset\, E_G$ to $P$, such that $E_P$ is the
Harder--Narasimhan reduction \cite{Be}, \cite{AAB}; the type of this
Harder--Narasimhan reduction is $\kappa$. From Lemma \ref{lemNitsure} we
know that $E_P$ extends to a holomorphic reduction of structure group of the principal
$G$--bundle $\mathcal{F}_G\vert_{\tau^{-1}(\mathcal{Y}_\kappa)}$ to the subgroup $P$.

Let $\text{ad}(E_P)$ and $\text{ad}(E_G)$ be the adjoint vector bundles
of $E_P$ and $E_G$ respectively. Consider the vector bundle
$$
\text{ad}(E_G)/\text{ad}(E_P)\,=\, E_P({\mathfrak g}/{\mathfrak p})
$$
(see \eqref{e6}). We know that
\begin{equation}\label{deg2}
\mu_{\rm max}(E_P({\mathfrak g}/{\mathfrak p}))\, <\, 0
\end{equation}
\cite[p. 705]{AAB} (see sixth line from bottom). In particular
\begin{equation}\label{deg3}
\text{degree}(E_P({\mathfrak g}/{\mathfrak p})) \, <\, 0\, .
\end{equation}
A holomorphic connection on $E_G$ induces a holomorphic connection
on $\text{ad}(E_G)$, hence $\text{degree}(\text{ad}(E_G))\,=\, 0$.
Combining this with \eqref{deg3} it follows that
$\text{degree}(\text{ad}(E_P))\,>\, 0$, because
$\text{ad}(E_G)/\text{ad}(E_P)\,=\,E_P({\mathfrak g}/{\mathfrak p})$.
Since $\text{degree}(\text{ad}(E_P))\,\not=\, 0$, the holomorphic vector
bundle $\text{ad}(E_P)$ does not admit any holomorphic connection, hence
the principal $P$--bundle
$E_P$ does not admit a holomorphic connection. Consequently,
the second fundamental form
$S(\delta)$ of $E_P$ for $\delta$ (see \eqref{sff}) is nonzero.

Using the second fundamental form $S(\delta)$, construct the holomorphic
line subbundle
$$
{\mathcal L}\, \subset\, E_P({\mathfrak g}/{\mathfrak p})
$$
as done in \eqref{l}. From \eqref{deg2} we have
\begin{equation}\label{deg}
\text{degree}({\mathcal L}) \, <\, 0\, .
\end{equation}
Consider the short exact sequence of sheaves on $X_0$
\begin{equation}\label{f7}
0\,\longrightarrow\, \mathrm{T}X_0 \,
\stackrel{S(\delta)'}{\longrightarrow}\, {\mathcal L}
\,\longrightarrow\, T^\delta \,:=\, {\mathcal L}/(S(\delta)(\mathrm{T}X_0))
\,\longrightarrow\, 0\, ,
\end{equation}
where $S(\delta)'$ is constructed in \eqref{sdp}; note that
$T^\delta$ is a torsion sheaf because $S(\delta)'\,\not=\, 0$
(recall that $S(\delta)\,\not=\, 0$). From \eqref{deg} it follows
that
$$
\text{degree}(T^\delta)\,=\, \text{degree}({\mathcal L}) -
\text{degree}(\mathrm{T}X_0) \, <\, - \text{degree}(\mathrm{T}X_0)\, =\, 2g-2\, .
$$
So, we have
$$
\dim \mathrm{H}^0(X_0,\, T^\delta)\,=\, \text{degree}(T^\delta)\, <\, 2g-2
\, \,=\, \dim \mathrm{H}^1(X_0,\, \mathrm{T}X_0)+1-g\, .
$$
This implies that
\begin{equation}\label{li}
\dim \mathrm{H}^1(X_0,\, \mathrm{T}X_0)- \dim \mathrm{H}^0(X_0,\, T^\delta)
\,\geq\, g\, .
\end{equation}
Consider the long exact sequence of cohomologies
$$
\mathrm{H}^0(X_0,\, T^\delta)\,\longrightarrow\,\mathrm{H}^1(X_0,\,
\mathrm{T}X_0)\,\stackrel{\zeta}{\longrightarrow}\,\mathrm{H}^1(X_0,\, {\mathcal L})
$$
associated to the short exact sequence of sheaves in \eqref{f7}. From \eqref{li} it
follows that
\begin{equation}\label{codi}
\dim \zeta(\mathrm{H}^1(X_0,\,
\mathrm{T}X_0))\, \geq\, g\, .
\end{equation}

Since the reduction $E_P$ extends to a holomorphic reduction of structure group of the
principal $G$--bundle $\mathcal{F}_G\vert_{\tau^{-1}(\mathcal{Y}_\kappa)}$
to the subgroup $P$, combining \eqref{codi} and Proposition \ref{prop1}
we conclude that the codimension of the complex analytic subset
$\mathcal{Y}_\kappa\, \subset\, {\mathcal T}_{g,0}$ is at least $g$.
This completes the proof of the theorem.
\end{proof}

\subsection{When $n$ is arbitrary}\label{se5.2}

Now we assume that $n\,>\,0$.

A logarithmic connection $\eta$ on a holomorphic principal $G$--bundle 
$F_G\,\longrightarrow\, X_0$ is called \textit{reducible} if there is pair 
$(P\, ,F_P)$, where $P\, \subsetneq\, G$ is a parabolic subgroup and 
$F_P\, \subset\, F_G$ is a holomorphic reduction of structure group of $F_G$ to $P$,
such that $\eta$ is induced by a connection on $F_P$. Note that $\eta$ is 
induced by a connection on $F_P$ if and only if the second fundamental 
form of $F_P$ for $\eta$ vanishes identically. A connection is called 
\textit{irreducible} if it is not reducible or, equivalently, if the 
monodromy representation of the corresponding flat principal $G$--bundle 
does not factor through any proper parabolic subgroup of $G$.

\begin{proposition}\label{propo3}
Assume that the logarithmic connection $\nabla_0$ in \eqref{na0} is irreducible.
Then there is a closed complex analytic subset ${\mathcal Y}\, \subset\,
{\mathcal T}_{g,n}$ of codimension at least $g$
such that for every $t\, \in\, {\mathcal T}_{g,n}\setminus {\mathcal Y}$, the
holomorphic principal $G$--bundle $\mathcal{F}_G\vert_{{\mathcal C}_t}$
is semistable.
\end{proposition}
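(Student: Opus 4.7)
My plan is to adapt the proof of Theorem \ref{prop2} almost verbatim, replacing the degree-based argument that forces the second fundamental form to be nonzero with a monodromy argument exploiting irreducibility. First I would define $\mathcal{Y}\,\subset\,\mathcal{T}_{g,n}$ as the union of all Harder--Narasimhan strata $\mathcal{Y}_\kappa$ of nontrivial type, which is a closed complex analytic subset by Lemma \ref{lemNitsure}; it suffices to bound the codimension of each such $\mathcal{Y}_\kappa$ by $g$.

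Fix $t\,\in\, \mathcal{Y}_\kappa$ and set $X_0\,:=\,{\mathcal C}_t$, $E_G\,:=\,\mathcal{F}_G\vert_{X_0}$, $\delta\,:=\,\nabla\vert_{X_0}$. Let $E_P\,\subset\, E_G$ be the Harder--Narasimhan reduction to some proper parabolic $P\,\subsetneq\, G$; by Lemma \ref{lemNitsure}, $E_P$ extends to a reduction of $\mathcal{F}_G\vert_{\tau^{-1}(\mathcal{Y}_\kappa)}$. The main novelty over Theorem \ref{prop2} is the verification that the second fundamental form $S(\delta)$ is nonzero. In the case $n\,=\,0$ this followed from the fact that a holomorphic connection on $E_G$ forces $\text{degree}(\text{ad}(E_G))\,=\,0$, which together with $\mu_{\rm max}(E_P({\mathfrak g}/{\mathfrak p}))\,<\,0$ yielded $\text{degree}(\text{ad}(E_P))\,>\,0$ and precluded a holomorphic connection on $E_P$. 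In the logarithmic setting, $\text{degree}(\text{ad}(E_G))$ is controlled by the residues and need not vanish, so this route is blocked. I would instead invoke isomonodromy: the monodromy of $\delta$ coincides with that of $\nabla_0$, so if $S(\delta)\,=\,0$ then $\delta$ would be induced by a logarithmic connection on $E_P$, forcing the monodromy of $\nabla_0$ to factor through $P\,\subsetneq\, G$, a contradiction. Hence $S(\delta)\,\not=\,0$.

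With $S(\delta)\,\not=\,0$ in hand, I would now follow the rest of the proof of Theorem \ref{prop2} with $\mathrm{T}X_0(-D_0)$ in place of $\mathrm{T}X_0$. Let $\mathcal{L}\,\subset\, E_P({\mathfrak g}/{\mathfrak p})$ be the line subbundle generated by the image of $S(\delta)$ as in \eqref{l}, and let $S(\delta)'\,:\,\mathrm{T}X_0(-D_0)\,\longrightarrow\,\mathcal{L}$ be the induced injection from \eqref{sdp}, with torsion cokernel $T^\delta$. Since $\mu_{\rm max}(E_P({\mathfrak g}/{\mathfrak p}))\,<\,0$, we have $\text{degree}(\mathcal{L})\,\leq\, -1$, and hence
$$
\text{degree}(T^\delta)\,=\,\text{degree}(\mathcal{L})-\text{degree}(\mathrm{T}X_0(-D_0))\,\leq\, -1+(2g-2+n)\,=\, 2g-3+n\,.
$$
Under the standing hypotheses ($g\,\geq\,1$, and $n\,>\,0$ if $g\,=\,1$), $\mathrm{H}^0(X_0,\,\mathrm{T}X_0(-D_0))\,=\,0$, so Riemann--Roch gives $\dim\mathrm{H}^1(X_0,\,\mathrm{T}X_0(-D_0))\,=\,3g-3+n$. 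The long exact sequence
$$
\mathrm{H}^0(X_0,\, T^\delta)\,\longrightarrow\,\mathrm{H}^1(X_0,\,\mathrm{T}X_0(-D_0))\,\stackrel{S'}{\longrightarrow}\,\mathrm{H}^1(X_0,\,\mathcal{L})
$$
then forces $\dim\ker(S')\,\leq\,\dim\mathrm{H}^0(X_0,\, T^\delta)\,=\,\text{degree}(T^\delta)\,\leq\, 2g-3+n$, i.e.\ the image of $S'$ has dimension at least $g$.

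Finally, by Proposition \ref{prop1} the Kodaira--Spencer class of any infinitesimal deformation along which $E_P$ extends must lie in $\ker(S')$. Identifying $\mathrm{H}^1(X_0,\,\mathrm{T}X_0(-D_0))$ with $T_t\mathcal{T}_{g,n}$, the tangent space $T_t\mathcal{Y}_\kappa$ is therefore contained in $\ker(S')$, a subspace of codimension at least $g$. Hence $\mathcal{Y}_\kappa$ has codimension at least $g$ at $t$, as required. The principal obstacle in the argument is precisely the replacement of the degree trick by the monodromy argument for $S(\delta)\,\not=\,0$; once this is done, the remaining cohomological estimates are a direct translation of the closed-surface case.
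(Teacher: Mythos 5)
Your proposal is correct and follows essentially the same route as the paper: the same stratification via Lemma \ref{lemNitsure}, the same use of irreducibility (preserved under isomonodromic deformation) to force $S(\delta)\,\neq\,0$ in place of the degree argument of Theorem \ref{prop2}, and the same cohomological estimate $\dim\mathrm{H}^0(X_0,T^\delta)\,\leq\,2g-3+n$ combined with Proposition \ref{prop1} to bound the codimension of each stratum by $g$. Your explicit tangent-space phrasing of the last step is just a slightly more detailed rendering of the paper's concluding sentence.
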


\begin{proof}
The proof of Theorem \ref{prop2} goes through after some obvious modifications.
As before, let ${\mathcal Y}\, \subset\, {\mathcal T}_{g,n}$ be the
locus of all points $t$ such that the principal $G$--bundle
$\mathcal{F}_G\vert_{{\mathcal C}_t}$ is not semistable.
Take any point $t\, \in\, {\mathcal Y}_\kappa\, \subset\,{\mathcal Y}$. Let $E_G\,=\,
\mathcal{F}_G\vert_{{\mathcal C}_t}$ be the holomorphic
principal $G$--bundle on $X_0\,:=\, {\mathcal C}_t$. The logarithmic
connection on $E_G$ with polar divisor $D_0\,:=\,\Sigma_t$ obtained by
restricting $\nabla$ will be denoted by $\delta$.

Let $E_P\, \subset\, E_G$ be the Harder--Narasimhan reduction; its
type is $\kappa$. Since $\nabla_0$ is 
irreducible, the second fundamental form $S(\delta)$ of $E_P$ for $\delta$ (see 
\eqref{sff}) is nonzero. We note that for the monodromy of a logarithmic connection, 
the property of being irreducible is preserved under isomonodromic deformations. 
Consider the short exact sequence of sheaves on $X_0$
\begin{equation}\label{l7}
0\,\longrightarrow\, \mathrm{T}X_0(-D_0) \,
\stackrel{S(\delta)'}{\longrightarrow}\, {\mathcal L}
\,\longrightarrow\, T^\delta \,:=\, {\mathcal L}/(S(\delta)(\mathrm{T}X_0(-D_0)))
\,\longrightarrow\, 0\, ,
\end{equation}
where $S(\delta)'$ is constructed in \eqref{sdp}. As before, $\text{degree}(
{\mathcal L})\, <\, 0$, because $\mu_{\rm max}(E_P({\mathfrak g}/{\mathfrak p}))
\, <\, 0$. So,
$$
\text{degree}(T^\delta)\,=\, \text{degree}({\mathcal L})- \text{degree}
(\mathrm{T}X_0(-D_0))\, <\, -\text{degree}(\mathrm{T}X_0(-D_0))\,=\,2g-2+n\, .
$$ We now have
$$
\dim \mathrm{H}^0(X_0,\, T^\delta)\,=\, \text{degree}(T^\delta)\, <\, 2g-2+n
\, \,=\, \dim \mathrm{H}^1(X_0,\, \mathrm{T}X_0(-D_0))+1-g\, .
$$
Hence the dimension of the image of the homomorphism
\begin{equation}\label{codi2}
\mathrm{H}^1(X_0,\, \mathrm{T}X_0(-D_0))\, \longrightarrow\,
\mathrm{H}^1(X_0,\, {\mathcal L})
\end{equation}
in the long exact sequence of cohomologies
associated to \eqref{l7} is at least $g$. Since the reduction $E_P$ extends to a
holomorphic reduction of $\mathcal{F}_G\vert_{\tau^{-1}(\mathcal{Y}_\kappa)}$ to
$P$, and the dimension of the image of the homomorphism in \eqref{codi2} is
at least $g$, from Proposition \ref{prop1} we conclude that the codimension of
$\mathcal{Y}_\kappa \, \subset\, {\mathcal T}_{g,0}$ is at least $g$.
\end{proof}

\subsection{Stability of underlying principal bundle}\label{se5.3}

We now assume that $g\, \geq\, 2$.

\begin{proposition}\label{thm1}
Assume that the logarithmic connection $\nabla_0$ in \eqref{na0} is irreducible.
There is a closed analytic subset ${\mathcal Y}'\, \subset\, {\mathcal T}$ of
codimension at least $g-1$
such that for every $t\, \in\, {\mathcal T}_g\setminus {\mathcal Y}$, the holomorphic
principal $G$--bundle $\mathcal{F}_G\vert_{{\mathcal C}_t}$ is stable.
\end{proposition}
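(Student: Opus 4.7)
The plan is to adapt the arguments of Theorem \ref{prop2} and Proposition \ref{propo3} to the strictly semistable case. First I would decompose the non-stability locus as $\mathcal{Y}'\,=\, \mathcal{Y}'_{\rm ns}\cup \mathcal{Y}'_{\rm sss}$, where $\mathcal{Y}'_{\rm ns}\,\subset\, \mathcal{T}_{g,n}$ is the set of $t$ for which $\mathcal{F}_G\vert_{\mathcal{C}_t}$ is not semistable, and $\mathcal{Y}'_{\rm sss}$ is the set of $t$ for which it is semistable but not stable. The piece $\mathcal{Y}'_{\rm ns}$ is already controlled by Theorem \ref{prop2} when $n\,=\,0$ and by Proposition \ref{propo3} when $n\,>\,0$ (both applicable since $\nabla_0$ is irreducible and $g\,\geq\, 2$): in either case $\mathrm{codim}\,\mathcal{Y}'_{\rm ns}\,\geq\, g\,\geq\, g-1$. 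The substance of the proposition lies in the codimension bound on $\mathcal{Y}'_{\rm sss}$.

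To handle $\mathcal{Y}'_{\rm sss}$, I stratify by the type of an admissible reduction. A semistable but non-stable principal $G$--bundle $E_G$ admits a holomorphic reduction $E_P\,\subset\, E_G$ to a proper parabolic subgroup $P\,\subsetneq\, G$ with $\deg E_P(\mathfrak{g}/\mathfrak{p})\,=\,0$ and polystable associated Levi reduction (the $G$--bundle version of the socle/Jordan--H\"older reduction, due to Ramanathan); in particular $E_P(\mathfrak{g}/\mathfrak{p})$ is semistable of degree zero, so $\mu_{\rm max}(E_P(\mathfrak{g}/\mathfrak{p}))\,\leq\, 0$. An appropriate family version of Lemma \ref{lemNitsure}, applied to admissible reduction types in place of Harder--Narasimhan types, stratifies $\mathcal{Y}'_{\rm sss}$ by locally closed complex analytic subsets $\mathcal{Y}'_\kappa$ over which a canonical reduction of type $\kappa$ extends holomorphically to $\mathcal{F}_G$ restricted to $\tau^{-1}(\mathcal{Y}'_\kappa)$.

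Now fix a stratum $\mathcal{Y}'_\kappa$ and $t\,\in\, \mathcal{Y}'_\kappa$; set $X_0\,:=\,\mathcal{C}_t$, $D_0\,:=\,\Sigma_t$, $E_G\,:=\,\mathcal{F}_G\vert_{X_0}$, and let $\delta$ be the restriction of $\nabla$ to $X_0$. Irreducibility of the monodromy is preserved by isomonodromy, so $\delta$ is irreducible and the second fundamental form $S(\delta)$ of $E_P$ for $\delta$ is nonzero. Let $\mathcal{L}\,\subset\, E_P(\mathfrak{g}/\mathfrak{p})$ be the line subbundle generated by the image of $S(\delta)$ as in \eqref{l}; since $\mu_{\rm max}(E_P(\mathfrak{g}/\mathfrak{p}))\,\leq\, 0$, we have $\deg\mathcal{L}\,\leq\, 0$. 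The short exact sequence
\begin{equation*}
0\,\longrightarrow\, \mathrm{T}X_0(-D_0)\,\stackrel{S(\delta)'}{\longrightarrow}\, \mathcal{L}\,\longrightarrow\, T^\delta\,\longrightarrow\, 0
\end{equation*}
then gives $\deg T^\delta\,=\, \deg\mathcal{L}-(2-2g-n)\,\leq\, 2g-2+n$, hence
\begin{equation*}
\dim\mathrm{H}^0(X_0,\, T^\delta)\,\leq\, 2g-2+n\,=\, \dim\mathrm{H}^1(X_0,\, \mathrm{T}X_0(-D_0))+1-g,
\end{equation*}
so $\dim\mathrm{H}^1(X_0,\, \mathrm{T}X_0(-D_0))-\dim\mathrm{H}^0(X_0,\, T^\delta)\,\geq\, g-1$. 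Consequently the image of the induced map $S'\,:\, \mathrm{H}^1(X_0,\, \mathrm{T}X_0(-D_0))\,\longrightarrow\, \mathrm{H}^1(X_0,\, \mathcal{L})$ has dimension at least $g-1$. By Proposition \ref{prop1} this image annihilates the Kodaira--Spencer class of every infinitesimal deformation of $(X_0\, ,D_0)$ tangent to $\mathcal{Y}'_\kappa$, so the Zariski tangent space to $\mathcal{Y}'_\kappa$ at $t$ has codimension at least $g-1$ in $T_t\mathcal{T}_{g,n}$. Taking the union of the (finitely many) strata $\mathcal{Y}'_\kappa$ together with $\mathcal{Y}'_{\rm ns}$ gives the desired bound.

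The main obstacle is the stratification step: establishing the semistable refinement of Lemma \ref{lemNitsure}, i.e., producing a locally closed analytic stratification of $\mathcal{Y}'_{\rm sss}$ over which an admissible reduction of prescribed type extends holomorphically. For principal $G$--bundles this requires a relative Ramanathan-type socle construction; once it is in hand, the remainder is a one-unit-sharper version of the cohomological estimate from Proposition \ref{propo3}, where the inequality $\deg\mathcal{L}\,<\, 0$ used there weakens to $\deg\mathcal{L}\,\leq\, 0$ here, precisely accounting for the drop of the codimension bound from $g$ to $g-1$.
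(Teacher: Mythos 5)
Your argument is essentially the paper's: its proof of this proposition likewise takes a maximal parabolic reduction with $E_P({\mathfrak g}/{\mathfrak p})$ semistable of degree zero, deduces $\mathrm{degree}({\mathcal L})\,\leq\, 0$ in place of the strict inequality, and runs the identical cohomological estimate to get the image of $\mathrm{H}^1(X_0,\mathrm{T}X_0(-D_0))\,\longrightarrow\,\mathrm{H}^1(X_0,{\mathcal L})$ of dimension at least $g-1$. The relative stratification of the strictly semistable locus that you flag as the main obstacle is not addressed in the paper either --- it simply declares the proof ``identical to the proof of Proposition \ref{propo3}'' and implicitly invokes an analogue of Lemma \ref{lemNitsure} for the canonical reductions witnessing non-stability --- so your write-up is, if anything, the more explicit of the two.
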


\begin{proof}
The proof is identical to the proof of Proposition \ref{propo3}.
If $E_G\,=\, \mathcal{F}_G\vert_{{\mathcal C}_t}$ is not stable, there
is a maximal parabolic subgroup $P\,\subsetneq\, G$ and a holomorphic reduction
of structure group $E_P\, \subset\, E_G$ to $P$, such that the quotient bundle
$$
\text{ad}(E_G)/\text{ad}(E_P)\,=\, E_P({\mathfrak g}/{\mathfrak p})
$$
is semistable of degree zero. Therefore, we have $\text{degree}({\mathcal L})
\,\leq\, 0$. This implies that $$\text{degree}(T^\delta)
\,\leq\, 2g-2-n\, .$$ 
Hence the dimension of the image of the homomorphism
$\mathrm{H}^1(X_0,\, \mathrm{T}X_0(-D_0))\, \longrightarrow\,
\mathrm{H}^1(X_0,\, {\mathcal L})$ in the long exact sequence of cohomologies
associated to \eqref{l7} is at least $g-1$.
\end{proof}

\section*{Acknowledgements}

We thank the referee for helpful comments. We thank Universit\'e de Brest for 
hospitality where the work was initiated. The first author is supported by a J. C. 
Bose Fellowship. The second author is supported by ANR-13-BS01-0001-01 and 
ANR-13-JS01-0002-01.


\end{document}